\newcounter{dummy} 
\numberwithin{dummy}{section}
\theoremstyle{plain}
\newtheorem{proposition}[dummy]{Proposition}
\theoremstyle{plain}
\newtheorem{lemma}[dummy]{Lemma}
\theoremstyle{plain}
\newtheorem{remark}[dummy]{Remark}
\theoremstyle{plain}
\theoremstyle{plain}
\newtheorem{example}[dummy]{Example}
\theoremstyle{plain}
\theoremstyle{plain}
\newtheorem{theorem}[dummy]{Theorem}
\theoremstyle{nonumberplain}
\newtheorem{proof}{Proof}
\definecolor{darkgreen}{rgb}{0.0,0.5,0.0}
\newcommand{\F}[1][R]{\mathbb{#1}} % Prints the field name in the appropriate format. Default is the field of real numbers.
\newcommand{\setsep}{\ \middle|\ } % Set separator 
\newcommand{\Eucprod}[2]{\left\langle #1,#2 \right\rangle} % Inner product sign <, >
\newcommand{\Ltwonorm}[1]{\left\Vert #1 \right\Vert} % Norm sign || ||
\newcommand{\tr}{\mathop{\mathrm{tr}}}
\newcommand{\SE}{\mathrm{SE}}
\newcommand{\OG}{\mathrm{O}}
\newcommand{\SO}{\mathrm{SO}}
\newcommand{\sgn}{\mathop{\mathrm{sgn}}}
\newcommand{\dnb}{\bm{\epsilon}}
\newcommand{\qal}{\F[H]}
\newcommand{\dnal}{\F[D]}
\newcommand{\ii}{\mathbf{i}}
\newcommand{\jj}{\mathbf{j}}
\newcommand{\kk}{\mathbf{k}}
\newcommand{\dqal}{\F[DH]}
\newcommand{\dqmat}[1][n]{\dqal^{#1 \times #1}}
\newcommand{\dqm}[1][n]{\dqal^{#1}}
\title{$ \SE(3) $ Synchronization by Eigenvectors\\of Dual Quaternion Matrices}
\author{Ido Hadi, Tamir Bendory, Nir Sharon}
\date{}
\begin{document}
	
	\maketitle
	
	\begin{abstract}
		In synchronization problems, the goal is to estimate elements of a group from noisy measurements of their ratios. A popular estimation method for synchronization is the spectral method. It extracts the group elements from eigenvectors of a block matrix formed from the measurements. The eigenvectors must be projected, or ``rounded,'' onto the group. The rounding procedures are constructed ad hoc and increasingly so when applied to synchronization problems over non-compact groups.
		
		In this paper, we develop a spectral approach to synchronization over the non-compact group $\SE(3)$, the group of rigid motions of $\F^3$. 
		We based our method on embedding $\SE(3)$ into the algebra of dual quaternions, which has deep algebraic connections with the group $\SE(3)$. 
		These connections suggest a natural rounding procedure considerably more straightforward than the current state-of-the-art for spectral $\SE(3)$ synchronization, which uses a matrix embedding of $\SE(3)$. 
		We show by numerical experiments that our approach yields comparable results to the current state-of-the-art in $\SE(3)$ synchronization via the spectral method.
		Thus, our approach reaps the benefits of the dual quaternion embedding of $\SE(3)$, while yielding estimators of similar quality.
	\end{abstract}

	\section{Introduction}
	\label{sec:introduction}
	
	Synchronization problems arise as part of data processing pipelines in several contexts, including single-particle reconstruction in cryogenic electron microscopy \cite{Shkolnisky2012,Bendory2020}, structure from motion problems \cite{Oezyesil2017,Arrigoni2018} and simultaneous localization and mapping (SLAM) problems \cite{Arrigoni2016,Thrun2005}.
	A synchronization problem is an estimation problems over a group $G$ in which group elements $g_{1}, \dots, g_{n} \in G $ are estimated from measurements of their ratios $g_{i} g_{j}^{-1}$.
	These measurements are inherently ambiguous, since the set of ratios of $g_{1} g, \dots, g_{n} g \in G$ for any $g \in G$ are the same as the ratios of the original group elements.
	Thus, in synchronization problems, the goal is to estimate the group elements up to right-multiplication by an arbitrary element of $G$.
	
	The spectral method is a widely used method for solving synchronization problems. It seeks to approximate a solution by finding eigenvectors of a block matrix formed by embedding the group into a matrix algebra. The embedding enables the richer algebraic structure of matrix algebras, and in particular their spectral decomposition, to be leveraged to solve the synchronization problem.
	Importantly, the eigenvectors of the observation matrix are not themselves a solution of the synchronization problem. They have to be projected back onto the group, a projection that is typically non-linear. This procedure is often referred to as ``rounding'' in the synchronization literature. In typical rounding procedures, a matrix formed by these eigenvectors is chopped into blocks and these blocks are subsequently mapped onto elements of the group.
	
	Consider a synchronization problem over a relative simple compact group in the absence of measurement noise. 
	As we explain in greater detail in \Cref{sec:GroupSynchronizationSpectralMethod}, even in this ideal setting, 
	typically there is a gap between the synchronization problem and the eigenproblem used to solve it.
	Oversimplifying to an extent, we can say that every solution to synchronization problem is a solution of the eigenproblem used by the spectral method, but not every solution of the eigenproblem is a solution of the synchronization problem.
	This issue is exacerbated for synchronization over non-compact group, the focal point of this paper. 
	The solutions of the eigenproblem form a compact set, the set of unit eigenvectors which span the relevant eigenspaces of the measurement matrix. 
	Yet, unlike the case of synchronization over compact groups, the possible solutions of a synchronization problems over non-compact groups form a non-compact set. Therefore, the gap between the two problems can be said to be much larger.
	
	In practice, this gap between the eigenproblem and the synchronization problem must be bridged by the rounding procedure.
	It ensures that the output of the synchronization method is comprised of elements of the group.
	However, existing rounding procedures are made to do so in an ad hoc manner. Specifically, they do not stem from a well characterized relationship between the synchronization problem and the eigenproblem or the relationship between their respective algebraic contexts, the group and the algebra in which it is embedded.
	They are merely constructed to ensure that the result of a synchronization problem is an element of the group. 
	
	In this paper, we address the issues we raised above in the non-compact case of $\SE(3)$ synchronization, the group of rigid motions, rotations and translations, of $\F^{3}$. 
	In \cite{Arrigoni2016}, an analog of the spectral method was developed for $\SE(3)$ synchronization, which utilized an embedding of $\SE(3)$ into a matrix algebra. 
	We replace this matrix algebra embedding with an embedding into the algebra of dual quaternions.
	Unlike the matrix algebra embedding used by \cite{Arrigoni2016}, the algebra of dual quaternions is much closer to the structure of $\SE(3)$, in a sense we explain in \Cref{sec:SE3RepresentationOnDualQuaternions}.
	Two recent advances in dual quaternion linear algebra were a spectral theorem for matrices of dual quaternions \cite{Qi2021} and a power iteration capable of approximating its spectra \cite{Cui2023}.
	These two results allow us to develop an elegant spectral synchronization method. The algebraic context eliminates the gap between the synchronization problem and the eigenproblem, at least in the absence of measurement noise.
	In addition, the rounding step itself stems directly from the algebraic relationship between elements of the algebra itself and the set on which $\SE(3)$ is represented. In addition, it is computationally simpler than the rounding procedure of \cite{Arrigoni2016}.
	
	The structure of this paper is as follows. We begin in \Cref{sec:DualQuaternionAlgebraAndFreeModules} with a survey of the necessary background material. Among other things, we define the algebra of dual quaternions, matrix algebras with dual quaternion entries and their properties. 
	In \Cref{sec:SpectralApproachToSynchronization}, we delve deeper into synchronization problems and the spectral method. We define the problem rigorously, introduce the spectral method and conclude by laying out our approach, relying on dual quaternion embedding of $\SE(3)$.
	Finally, in \Cref{sec:NumericalExperiments}, we demonstrate empirically that our approach yields comparable results to the current state-of-the-art spectral method for $\SE(3)$ synchronization developed by \cite{Arrigoni2016}. Thus, we offer a better theoretical foundation tying $\SE(3)$ synchronization problems with eigenproblems, while still maintaining the quality of estimation of the current state-of-the-art spectral method.

	\section{Background: Dual Quaternions and Matrices of Dual Quaternions}
	\label{sec:DualQuaternionAlgebraAndFreeModules}
	
	We provide a succinct background information on dual quaternions and dual quaternion matrices.
	In \Cref{sec:DualNumbers} and \Cref{sec:DualQuaternions} we construct the dual quaternions and describe some of their properties.
	In \Cref{sec:SO3RepresentationOnQuaternions} we survey the well known representation of $\SO(3)$ on the algebra of quaternions and in \Cref{sec:SE3RepresentationOnDualQuaternions} we survey the lesser known representation of $\SE(3)$ on the algebra of dual quaternions.
	In \Cref{sec:DQSpectralTheorem} we define the free modules of dual quaternions and algebras of matrices with dual quaternion elements. This section culminates in the spectral theorem for matrices of dual quaternions, which was recently proved by \cite{Qi2021}.
	Finally, in \Cref{sec:PowerIteration} we survey the power iteration for matrices of dual quaternions, which was recently developed by \cite{Cui2023}.

	\subsection{Dual Numbers}
	\label{sec:DualNumbers}
	
	The algebra of dual numbers is the set $\dnal = \left\{ a + b \dnb \setsep a, b \in \F \right\}$ with addition and multiplication defined by
	\begin{align}
		\label{eq:DNAddition}
		\left(a_{1} + b_{1} \dnb\right) + \left(a_{2} + b_{2} \dnb \right)
		&= \left(a_{1} + a_{2}\right) + \left(b_{1} + b_{2} \right) \dnb \\
		\label{eq:DNProduct}
		\left( a_{1} + b_{1} \dnb \right) \left( a_{2} + b_{2} \dnb  \right)
		&= a_{1} a_{2} + \left( a_{1} b_{2} + b_{1} a_{2} \right) \dnb.
	\end{align}
	If $a + b \dnb \in \dnal$, it is convenient to refer to $a$ as the real coordinate and $b$ as the dual coordinate.
	It is easy to see from their definition that the dual numbers are defined analogously to the complex numbers. 
	Whereas the complex numbers are the algebra over the reals generated by $1 $ and $ i$ such that $i^{2} = - 1$, the algebra of dual numbers is generated by $1$ and $\dnb$ such that $\dnb^{2} = 0$.
	This similarity accounts for the similar properties of the algebras of dual numbers and complex numbers.
	Namely, the underlying vector space of both is $\F^{2}$, addition and multiplication operations of both are associative and commutative and both algebras have a unit, $1$.
	
	Despite the aforementioned similarities, these two algebras are fundamentally very different. The algebra of complex numbers is a field, while the algebra of dual numbers is not. 
	It has zero divisors, i.e., non-zero elements whose product is zero.
	Indeed, recall that an element $x$ of an algebra is said to be nilpotent if there is a positive integer $n$ such that $x^{n} = 0$.
	The only nilpotent complex number is $0$.
	The following proposition characterizes the nilpotent dual numbers, and it is obvious it has many non-zero nilpotents.
	\begin{proposition}
		\label{prop:DNNilpotents}
		An $x = a + b \dnb \in \dnal$ is nilpotent if and only if $a = 0$.
		Also, $x^{2} = 0$ for every nilpotent.
	\end{proposition}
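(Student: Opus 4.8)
The plan is to prove both directions of the equivalence directly from the multiplication rule \eqref{eq:DNProduct}, and then use the explicit formula for the square to handle the second assertion uniformly. First I would compute $x^2$ for an arbitrary $x = a + b\dnb$: by \eqref{eq:DNProduct} with $a_1 = a_2 = a$ and $b_1 = b_2 = b$, we get $x^2 = a^2 + 2ab\,\dnb$. This single computation is the engine for everything that follows.

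For the ``if'' direction, suppose $a = 0$, so $x = b\dnb$. Then the formula gives $x^2 = 0^2 + 2\cdot 0 \cdot b\,\dnb = 0$, so $x$ is nilpotent (with $n = 2$), which simultaneously establishes the final sentence of the proposition for this case. For the ``only if'' direction, suppose $x = a + b\dnb$ is nilpotent, so $x^n = 0$ for some positive integer $n$. I would argue that the real coordinate of $x^n$ equals $a^n$: this follows by an easy induction on $n$ using \eqref{eq:DNProduct}, since the real coordinate of a product $(a_1 + b_1\dnb)(a_2+b_2\dnb)$ is $a_1 a_2$, the product of the real coordinates. Hence $x^n = 0$ forces $a^n = 0$ in $\F$, and since $\F$ is an integral domain (indeed a field), $a = 0$. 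Finally, once we know every nilpotent has $a = 0$, the formula $x^2 = a^2 + 2ab\,\dnb = 0$ gives the last claim that $x^2 = 0$ for every nilpotent.

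There is essentially no obstacle here: the only thing to be careful about is the trivial induction showing that the real coordinate is multiplicative, which amounts to observing that the projection $\dnal \to \F$ sending $a + b\dnb \mapsto a$ is a ring homomorphism (its kernel being the ideal of nilpotents). If one prefers, the ``only if'' direction can be phrased entirely in terms of this homomorphism: a nilpotent maps to a nilpotent in $\F$, and $0$ is the only nilpotent in a field, so the real coordinate vanishes. Either phrasing is short; I would present the homomorphism version as it makes the structure transparent and foreshadows the later observation that the nilpotents form the unique maximal ideal of $\dnal$.
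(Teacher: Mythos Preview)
Your proof is correct and follows essentially the same approach as the paper: both arguments hinge on the observation that the real coordinate of $x^{n}$ is $a^{n}$ (the paper writes $x^{n} = a^{n} + (\dots)\dnb$), and both dispatch the $a=0$ case by directly computing $x^{2}=0$. Your additional remark about the projection $a+b\dnb \mapsto a$ being a ring homomorphism is a nice way to package the multiplicativity of the real coordinate, but it does not constitute a different method.
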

	
	\begin{proof}
		If $a \ne 0$, it is easy to show that \eqref{eq:DNProduct} implies that $x^{n} = a^{n} + \left(\dots\right)\dnb \ne 0$.
		If $a = 0$, then $x^{2} = b^{2} \dnb^{2} = 0$.
	\end{proof}
	
	As the next proposition shows, nilpotent dual numbers are the only dual numbers which are not invertible.
	\begin{proposition}
		\label{prop:DNInvertible}
		If $x = a + b \dnb \in \dnal $, 
		$x$ is invertible if and only it is not nilpotent.
		In that case, $x^{-1} = a^{-1} - b a^{-2} \dnb$.
	\end{proposition}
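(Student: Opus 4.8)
The plan is to prove both directions of the equivalence by direct computation in $\dnal$, then verify the claimed formula for the inverse by multiplying it out. First I would dispose of the easy direction: if $x = a + b\dnb$ is nilpotent, then by \Cref{prop:DNNilpotents} we have $a = 0$, so $x = b\dnb$; but then $xy = b\dnb(c + d\dnb) = bc\,\dnb$ for any $y = c + d\dnb$, which has zero real coordinate and hence can never equal $1$. So a nilpotent element is not invertible.

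For the converse, suppose $x = a + b\dnb$ is not nilpotent, so $a \ne 0$ again by \Cref{prop:DNNilpotents}. The natural candidate for the inverse is dictated by the multiplication rule \eqref{eq:DNProduct}: writing $x^{-1} = c + d\dnb$, the equation $x x^{-1} = 1$ becomes $ac + (ad + bc)\dnb = 1$, which forces $c = a^{-1}$ and then $ad + bc = 0$, i.e. $d = -bc/a = -ba^{-2}$. So I would simply exhibit $x^{-1} = a^{-1} - ba^{-2}\dnb$ and check by \eqref{eq:DNProduct} that $x \cdot (a^{-1} - ba^{-2}\dnb) = a a^{-1} + (a(-ba^{-2}) + b a^{-1})\dnb = 1 + (-ba^{-1} + ba^{-1})\dnb = 1$, with the reverse product identical since $\dnal$ is commutative. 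This also establishes the explicit formula in the statement.

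There is no real obstacle here — the proposition is elementary — but the one point that deserves a sentence of care is the phrase ``not invertible'' in a non-field algebra: being a zero divisor (as a nilpotent $b\dnb$ manifestly is, since $(b\dnb)(\dnb) = 0$) immediately precludes invertibility, which is the cleanest way to frame the easy direction and avoids any appeal to coordinates beyond the real part. So the write-up would be: (i) note $a \ne 0 \iff$ not nilpotent by the previous proposition; (ii) if $a = 0$, observe $x$ is a zero divisor, hence not invertible; (iii) if $a \ne 0$, verify the displayed formula gives a two-sided inverse via \eqref{eq:DNProduct}.
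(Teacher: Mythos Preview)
Your proof is correct and follows essentially the same approach as the paper: both reduce the question of invertibility to solving the system $ac = 1$, $ad + bc = 0$ obtained from \eqref{eq:DNProduct}, observe that a solution exists iff $a \ne 0$, and invoke \Cref{prop:DNNilpotents} to link $a \ne 0$ with non-nilpotency. Your added zero-divisor remark is a nice alternative phrasing for the easy direction but not a different argument.
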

	
	\begin{proof}
		Let $y = c + d \dnb$. From \eqref{eq:DNProduct} it follows that $x y = 1$ if 
		\begin{equation*}
			ac = 1, \qquad ad + bc = 0.
		\end{equation*}
		The first equation has a solution if and only if $a \ne 0$. By \Cref{prop:DNNilpotents}, this holds if and only if $x$ is not nilpotent. 
		The formula for $x^{-1}$ is easily obtained by solving this system of equations when $a \ne 0$.
	\end{proof}
	
	A final point of analogy between the dual and complex numbers, is that both admit a square root.
	Following \cite{Qi2022}, given $x = a + b \dnb$ and $y = c + d \dnb$, we write $x > y$ if $a > c$, or $a = c$ and $b > d$.
	We say a dual number $x = a + b \dnb \in \dnal$ is \textbf{non-negative} if $x \ge 0 $ and \textbf{positive} if $x > 0$.
	In \cite[Sec. 2]{Qi2022}, the following definition of a square root was given:
	\begin{equation}
		\label{eq:DNAbsoluteValue}
		\sqrt{x}
		= \begin{cases}
			\sqrt{a} + \frac{b}{2 \sqrt{a}} \dnb  & x \mbox{ is positive and invertible} , \\
			0 	& x = 0.
		\end{cases}
	\end{equation}
	In all other cases, the square root is undefined.
	Using this definition, it is possible to show that $\sqrt{x y } = \sqrt{x} \sqrt{y}$.
	This multiplicative identity implies that for any $x = a + b \dnb\in \dnal$ with $a \ne 0$, we have
	$\sqrt{x^{2}} = \left|a\right| + \sgn(a) b \dnb $, where $\sgn(a) = 1$ for $a>0$, zero when $a = 0$ and $-1$ when $a<0$.
	Such considerations motivated \cite{Qi2022} to define the following absolute value on dual numbers:
	\begin{equation*}
		\left|x\right|
		= \begin{cases}
			\left|a\right| + \sgn (a) b \dnb & 	a \ne 0 \\
			\left|b\right| \dnb & a = 0.
		\end{cases}
	\end{equation*}
	It has the following properties \cite[Theorem 2]{Qi2022}, which are generalized forms of the usual absolute value defined on the real line:
	\begin{theorem}
		For any $x, y \in \dnal$:
		\begin{enumerate}
			\item $\left|x\right| = 0$  if and only if $x = 0$.
			
			\item $\left|x\right| = x$ if $x \ge 0$ and $\left|x\right| > x$ otherwise.
			
			\item $\left|x\right| = \sqrt{x^{2}}$ if $x$ is invertible.
			
			\item $\left|x y\right| = \left|x\right| \left|y\right|$ for any $x, y \in \dnal$.
			
			\item $\left| x + y\right| \le \left|x\right| + \left|y\right|$.
			
		\end{enumerate}
	\end{theorem}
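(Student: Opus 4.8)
The plan is to verify the five properties one at a time, in each case reducing to the corresponding fact about the ordinary absolute value on $\F$ and isolating the degenerate case where the real coordinate vanishes; throughout I write $x = a + b\dnb$ and $y = c + d\dnb$. For property~1, if $a \ne 0$ the real coordinate of $\left|x\right|$ equals $\left|a\right| \ne 0$, so $\left|x\right| \ne 0$, and if $a = 0$ then $\left|x\right| = \left|b\right|\dnb$, which is $0$ precisely when $b = 0$. For property~2 I would simply compare the definition of the order on $\dnal$ with the definition of $\left|x\right|$: when $a > 0$, or $a = 0$ and $b \ge 0$, one reads off $\left|x\right| = x$; when $a < 0$ the real coordinates satisfy $-a > a$, and when $a = 0$, $b < 0$ the dual coordinates satisfy $-b > b$, so $\left|x\right| > x$ in the two remaining cases. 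Property~3 follows from the identity recorded just before the theorem statement: if $x$ is invertible then $a \ne 0$ by \Cref{prop:DNInvertible}, so $x^{2} = a^{2} + 2ab\dnb$ is positive and invertible, and \eqref{eq:DNAbsoluteValue} yields $\sqrt{x^{2}} = \left|a\right| + \sgn(a)b\dnb$, which is exactly $\left|x\right|$.

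For property~4 the plan is to split into the four cases determined by whether $a$ and $c$ vanish. If $a, c \ne 0$, then $xy$ has nonzero real coordinate $ac$; expanding $\left|x\right|\left|y\right|$ with \eqref{eq:DNProduct} and substituting $\left|a\right| = \sgn(a)a$ and $\left|c\right| = \sgn(c)c$ collapses the dual coordinate to $\sgn(a)\sgn(c)(ad + bc) = \sgn(ac)(ad + bc)$, which matches $\left|xy\right|$. If exactly one of $a, c$ is zero, $xy$ is a pure dual number, one term of $\left|x\right|\left|y\right|$ is killed by $\dnb^{2} = 0$, and the identity reduces to multiplicativity of the real absolute value; if both vanish, both sides are $0$.

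Property~5 is the part I expect to be the main obstacle, since one must keep track of which of the three real coordinates $a$, $c$, $a + c$ are zero. The representative case is $a$, $c$, $a + c$ all nonzero: comparing real coordinates gives $\left|a + c\right| \le \left|a\right| + \left|c\right|$, and if this inequality is strict then $\left|x + y\right| \le \left|x\right| + \left|y\right|$ holds regardless of the dual coordinates, whereas if $\left|a + c\right| = \left|a\right| + \left|c\right|$ then $a$ and $c$ share a sign, so $\sgn(a + c) = \sgn(a) = \sgn(c)$, and one checks that the dual coordinates of the two sides coincide, giving equality. In the remaining cases the verification is lighter: if $a, c \ne 0$ but $a + c = 0$ the left side has real coordinate $0$ while the right side has real coordinate $2\left|a\right| > 0$; if exactly one of $a, c$ vanishes the real coordinates agree and the dual-coordinate comparison reduces to $\sgn(c)b \le \left|b\right|$ (or its mirror), which always holds; and if $a = c = 0$ the claim is just $\left|b + d\right| \le \left|b\right| + \left|d\right|$ in $\F$. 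Assembling the cases completes the proof.
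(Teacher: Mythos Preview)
Your proof is correct. The paper itself does not prove this theorem; it simply cites it as \cite[Theorem 2]{Qi2022} and states the result without argument. Your case-by-case verification---splitting on whether the real coordinates $a$, $c$, and $a+c$ vanish, and reducing each case to the corresponding fact about the real absolute value---is exactly the natural approach, and all five parts go through as you describe. In particular, your handling of property~5 (the triangle inequality) correctly identifies that when $|a+c| = |a| + |c|$ the signs of $a$ and $c$ must agree, forcing equality in the dual coordinate as well, and your treatment of the degenerate cases is complete.
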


	\subsection{Dual Quaternions}
	\label{sec:DualQuaternions}
	
	The algebra of dual quaternions is easiest to define using the algebra of quaternions.
	The algebra of quaternions is the set $\qal = \left\{ a + b \ii + c \jj + d \kk \setsep a, b, c, d \in \F \right\}$.
	We typically refer to $b$, $c$ and $d$ as the $\ii$, $\jj$ or $\kk$ coordinates, respectively.
	In the literature, $a$ is sometimes referred to as the real part of the quaternion, but in order to eliminate confusion with the real and dual parts of a dual number, we simply refer to it as the first coordinate of the quaternion.
	Addition in $\qal$ is defined coordinate-wise and multiplication is defined by the following relations among its generators:
	\begin{equation}
		\label{eq:QuaternionGeneratorsRelations}
		\ii^{2} 
		= \jj^{2} 
		= \kk^{2} 
		= \ii \jj \kk 
		= -1.
	\end{equation}
	As a vector space, $\qal \cong \F^{4}$. Also, $\F \subset \qal$ by constraining the $\ii$, $\jj$ and $\kk$ coordinates to zero.
	The center of algebra, the set of elements for which multiplication commutes with all other elements of the algebra, is $\F$.
	There is an involution on the quaternions, defined analogously to the complex conjugation, $q^{*} = a - b \ii - c \jj - d \kk$ for $q = a + b \ii + c \jj + d \kk$.
	This involution is referred to as \textbf{quaternion conjugation} and we say that $q^{*}$ is the \textbf{conjugate} of $q$.
	Similarly to the complex norm, there is a norm on the quaternions defined as 
	$\left|q\right| \coloneqq \sqrt{q^{*} q} = \sqrt{a^{2} + b^{2} + c^{2} + d^{2}}$, 
	which can be shown to imply $\qal \cong \F^{4}$ as a normed vector space.
	We note here that $\left|q\right|^{2} = q^{*} q = q q^{*}$, and so the inverse of every non-zero $q \in \qal$ is $\frac{q^{*}}{q^{*} q}$.
	Overall, the algebra of quaternions is an involutive normed division algebra.
	
	The algebra of dual quaternions is $\dqal = \left\{ a + b \dnb \setsep a, b \in \qal \right\}$.
	Essentially, it means a dual quaternion is a dual number with quaternion coordinates instead of real coordinates.
	From this, we immediately see that $\dnal \subset \dqal$ and also that $\qal \subset \dqal$, the former by constraining $a, b \in \F$ and the latter by setting $b = 0$.
	Addition and multiplication are still defined by \eqref{eq:DNAddition} and \eqref{eq:DNProduct} with the quaternion binary operations replacing the real ones.
	The trivial involution on $\dnal$, the identity, and quaternion conjugation, induce an involution on $\dqal$, $x^{*} = a^{*} + b^{*} \dnb$ for $x = a + b \dnb \in \dqal$.
	Because~$\dnal$ is the center of $\dqal$, we can use the usual notation for division whenever $d = a + b \dnb \in \dnal$ is invertible and $x \in \dqal$, and so we write 
	\begin{equation}
		\label{eq:DNDivision}
		\frac{x}{d} 
		\coloneqq x d^{-1}
		= d^{-1} x.
	\end{equation}
	As a vector space, $\dqal \cong \F^{8}$.
	We use the same names for the coordinates we used for the dual numbers. Given $a + b \dnb \in \dqal$, we call $a$ the real coordinate and $b$ the dual coordinate.
	
	The algebra of dual numbers is a unital algebra, since both $\dnal$ and $\qal$ are.
	It is non-commutative, since $\qal$ is. 
	It is not a division algebra, since $\dnal$ is not.
	This combination of features set it apart from both the algebra of quaternions and the algebra of dual numbers.
	Despite this, many of their features generalize well to the dual quaternions.
	As we did for the dual numbers, we begin by noting that its nilpotents are characterized in exactly the same way as the nilpotents of the dual numbers.
	Indeed, one need only replace $\dnal$ with $\dqal$ in \Cref{prop:DNNilpotents} to obtain the proper characterization.
	
	Note here that zero is the only nilpotent in the algebra of quaternions.
	Therefore, one can say the first part of \Cref{prop:DNInvertible} holds for quaternions, namely, that a quaternion is invertible if and only if it is not nilpotent.
	We show now that the dual quaternions share this property.
	Before that, we prove a lemma pointing out several important features of the involutive structure of $\dqal$.
	\begin{lemma}
		\label{lem:DQInvolutiveStructure}
		If $x = a + b \dnb \in \dqal$, then
		$x^{*} x 
		= \left|a\right|^{2} 
		+ \left( a b^{*} + b a^{*} \right) \dnb $ is a dual number
		and $x^{*} x = x x^{*}$.
	\end{lemma}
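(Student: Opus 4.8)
The plan is to prove both assertions by a direct expansion, using only the multiplication rule \eqref{eq:DNProduct} with quaternion operations in place of real ones, together with the anti-multiplicativity $(pq)^{*} = q^{*} p^{*}$ of quaternion conjugation and $(q^{*})^{*} = q$. First I would expand
\[
x^{*} x = \left(a^{*} + b^{*}\dnb\right)\left(a + b\dnb\right) = a^{*} a + \left(a^{*} b + b^{*} a\right)\dnb ,
\]
reading off the real coordinate $a^{*}a$ and the dual coordinate $a^{*}b + b^{*}a$. The real coordinate is $\left|a\right|^{2}$ by the definition of the quaternion norm recalled in \Cref{sec:DualQuaternions}. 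To see that $x^{*}x$ is a dual number, i.e.\ that the dual coordinate lies in $\F \subset \qal$, I would note that $\left(a^{*}b + b^{*}a\right)^{*} = b^{*}a + a^{*}b$, so the dual coordinate is a self-conjugate quaternion; the only self-conjugate quaternions are the real scalars (those with vanishing $\ii$, $\jj$, $\kk$ coordinates), which gives the first claim and the stated formula. The same expansion with $a,b$ and $a^{*},b^{*}$ interchanged yields $xx^{*} = \left|a\right|^{2} + \left(ab^{*} + ba^{*}\right)\dnb$, again a dual number by the identical argument.

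It then remains to show $a^{*}b + b^{*}a = ab^{*} + ba^{*}$, for then $x^{*}x$ and $xx^{*}$ have the same real and the same dual coordinate. I would observe that both sides have the form $q + q^{*}$, namely $a^{*}b + (a^{*}b)^{*}$ and $ab^{*} + (ab^{*})^{*}$, so each equals twice the first (scalar) coordinate of the corresponding quaternion. Invoking the two elementary properties of the scalar part of a quaternion product — invariance under conjugation, and the cyclic identity that the scalar part of $pq$ equals that of $qp$ — the scalar part of $ab^{*}$ equals that of $b^{*}a$, which equals that of $\left(b^{*}a\right)^{*} = a^{*}b$. Hence $q + q^{*}$ takes the same value for $q = ab^{*}$ and $q = a^{*}b$, which is the desired equality, and therefore $x^{*}x = xx^{*}$.

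The computation is routine bookkeeping with \eqref{eq:DNProduct}; the one place that needs a moment's care is this last step, where one must not commute quaternions directly (which is illegal) but instead pass through the scalar part and use its conjugation-invariance together with the trace-like cyclic property. I do not anticipate any genuine obstacle.
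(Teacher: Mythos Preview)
Your proof is correct and follows essentially the same route as the paper: expand via \eqref{eq:DNProduct}, identify the real coordinate as $|a|^2$, and note that the dual coordinate has the form $q+q^{*}\in\F$. The paper dispatches the identity $x^{*}x=xx^{*}$ with the phrase ``follows from a direct calculation''; your explicit verification that the scalar part of $ab^{*}$ equals that of $a^{*}b$ (via cyclicity and conjugation-invariance of the quaternion scalar part) fills in exactly that calculation. One cosmetic remark: the formula in the lemma, with dual coordinate $ab^{*}+ba^{*}$, is literally what drops out of expanding $xx^{*}$, whereas your expansion of $x^{*}x$ yields $a^{*}b+b^{*}a$; you reach the stated form only after proving the two are equal, so you might streamline by computing $xx^{*}$ first.
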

	\begin{proof}
		The first equality is easily proved by substituting $x^{*}$ and $x$ into \eqref{eq:DNProduct}.
		To prove the result is a dual number, we note first that the norm of a quaternion is a real number. In addition, $q^{*} + q \in \F$ for every quaternion $q$ and $\left( a b^{*} \right)^{*} = b a^{*}$, which implies $a b^{*} + b a^{*}$ is also real.
		The second identity follows from a direct calculation.
	\end{proof}
	
	\begin{proposition}
		\label{prop:DQInvertible}
		If $x = a + b \dnb \in \dqal$, $x$ is invertible if and only if it is not nilpotent. In that case, $x^{-1} = \frac{x^{*}}{x^{*} x}$.
	\end{proposition}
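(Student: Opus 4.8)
The plan is to treat the equivalence in two directions and then verify the formula for $x^{-1}$. The forward direction — invertible implies not nilpotent — holds in any unital algebra and needs no dual-quaternion-specific input: if $x^{n} = 0$ with $n \ge 1$ and $xy = 1$, then $x^{n-1} = x^{n-1}(xy) = (x^{n}) y = 0$, and iterating downward forces $1 = 0$, a contradiction. So I would dispatch this first.

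For the converse, assume $x = a + b\dnb$ is not nilpotent. By the dual-quaternion analog of \Cref{prop:DNNilpotents} (obtained, as the text remarks, by replacing $\dnal$ with $\dqal$), non-nilpotency is equivalent to $a \ne 0$ in $\qal$. I would then bring in \Cref{lem:DQInvolutiveStructure}, which gives that $x^{*} x = |a|^{2} + (a b^{*} + b a^{*})\dnb$ lies in $\dnal$ and that $x^{*} x = x x^{*}$. Since the quaternion norm is nondegenerate, $a \ne 0$ forces $|a|^{2}$ to be a nonzero real; thus the real coordinate of the dual number $x^{*}x$ is nonzero, so by \Cref{prop:DNInvertible} the dual number $x^{*}x$ is invertible in $\dnal$. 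Consequently $\frac{x^{*}}{x^{*}x} = x^{*}(x^{*}x)^{-1}$ is well-defined, using the notation of \eqref{eq:DNDivision}.

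The remaining step is a short verification. Since $(x^{*}x)^{-1} \in \dnal$ is central in $\dqal$ and $x x^{*} = x^{*} x$, one computes $x \cdot \frac{x^{*}}{x^{*}x} = (x x^{*})(x^{*}x)^{-1} = (x^{*}x)(x^{*}x)^{-1} = 1$ and likewise $\frac{x^{*}}{x^{*}x}\cdot x = (x^{*}x)^{-1}(x^{*}x) = 1$. Hence $x$ is invertible with inverse $\frac{x^{*}}{x^{*}x}$, which closes the equivalence.

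I do not expect a genuine obstacle here: the argument is essentially an assembly of \Cref{prop:DNInvertible}, \Cref{lem:DQInvolutiveStructure}, and the nilpotent characterization. The one point that deserves care is justifying that $x^{*}x$, known a priori only to be \emph{some} dual number, is actually an \emph{invertible} dual number — this is precisely where non-nilpotency of $x$ enters, through the implication $a \ne 0 \Rightarrow |a|^{2} > 0$ for quaternions.
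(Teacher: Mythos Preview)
Your proposal is correct and follows essentially the same route as the paper: invoke \Cref{lem:DQInvolutiveStructure} to see that $x^{*}x \in \dnal$ has real coordinate $|a|^{2} > 0$ when $a \ne 0$, apply \Cref{prop:DNInvertible} to invert it, and then check that $\frac{x^{*}}{x^{*}x}$ is a two-sided inverse using centrality and $x x^{*} = x^{*} x$. The only cosmetic difference is in the easy direction: the paper dispatches ``nilpotent $\Rightarrow$ not invertible'' by the concrete observation that $a = 0$ forces $xy = bc\,\dnb \ne 1$ for any $y = c + d\dnb$, whereas you use the general ring-theoretic fact that no nilpotent in a unital algebra can be a unit.
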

	\begin{proof}
		If $a = 0$, we have $x y = b c \dnb \ne 1 $ for every $y = c + d \dnb \in \dqal$.
		Therefore, $x$ is invertible only if $a \ne 0$, that is, only if $x$ is not nilpotent.
		If $a \ne 0$, it follows from \Cref{lem:DQInvolutiveStructure} that the real coordinate of $x^{*} x$ is positive.
		Thus, by \Cref{prop:DNInvertible} $x^{*} x$ is invertible.
		Finally, taking $y = \frac{x^{*}}{x^{*} x}$ one obtains that $y x = 1$.
		The equality $x y = 1$ follows easily from the second equality in \Cref{lem:DQInvolutiveStructure}.
	\end{proof}
	
	In \cite{Qi2022}, absolute value was also defined for dual quaternions. If $x = a + b \dnb \in \dqal$, then
	\begin{equation}
		\label{eq:DQAbsoluteValue}
		\left| x \right|
		= \begin{cases}
			\left|a\right| + \frac{a b^{*} + b a^{*} }{2 \left|a\right|} \dnb		& 	a \ne 0 \\
			\left|b\right| \dnb		&	a = 0.
		\end{cases}
	\end{equation}
	It can be readily seen that when $x$ is a dual number, \eqref{eq:DQAbsoluteValue} and \eqref{eq:DNAbsoluteValue} agree.
	In \cite[Thm. 4]{Qi2022}, the following properties of the absolute value of dual quaternions were proved:
	\begin{theorem}
		\label{thm:DQAbsoluteValueProperties}
		Let $x, y \in \dqal$.
		\begin{enumerate}
			\item $\left|x\right|$ is a dual number.
			
			\item If $x$ is invertible, then $\left|x\right| = \sqrt{q q^{*}}$.
			
			\item $\left|x\right| = \left|x^{*}\right|$.
			
			\item $\left|x\right| \ge 0$ for all $x$ and $\left|x\right| = 0$ if and only if $x = 0$.
			
			\item $\left|x y\right| = \left|x\right| \left|y\right|$.
			
			\item $\left|x + y \right| \le \left|x\right| + \left|y\right|$.
			
		\end{enumerate}
	\end{theorem}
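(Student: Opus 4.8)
The plan is to check the six assertions by unwinding the definition \eqref{eq:DQAbsoluteValue} and reducing each to facts already available: multiplicativity and conjugation‑invariance of the quaternion norm on $\qal$, the elementary identities $q + q^{*} = 2\re q \in \F$, $\re(pq) = \re(qp)$ and $\re q = \re q^{*}$ for $p, q \in \qal$, \Cref{lem:DQInvolutiveStructure}, \Cref{prop:DQInvertible}, and the dual‑number square root \eqref{eq:DNAbsoluteValue}. The recurring observation is that $a b^{*} + b a^{*} = 2 \re(a b^{*}) \in \F$; together with $|b| \in \F$ this gives property~1 at once, in both cases $a \ne 0$ and $a = 0$ of \eqref{eq:DQAbsoluteValue}. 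Property~3 follows by writing $x^{*} = a^{*} + b^{*} \dnb$, using $|a^{*}| = |a|$, and noting $\re(a^{*} b) = \re(b a^{*}) = \re(a b^{*})$, so the dual coordinates of $|x|$ and $|x^{*}|$ coincide (the case $a = 0$ being immediate). Property~4 is direct inspection of \eqref{eq:DQAbsoluteValue}: the real coordinate of $|x|$ is $|a| \ge 0$, and when $a = 0$ its dual coordinate is $|b| \ge 0$, whence $|x| \ge 0$ always and $|x| = 0$ forces first $a = 0$ and then $b = 0$. Property~2 is also short: if $x$ is invertible then $a \ne 0$ by \Cref{prop:DQInvertible}, so by \Cref{lem:DQInvolutiveStructure} the dual number $x x^{*} = |a|^{2} + (a b^{*} + b a^{*})\dnb$ is positive and invertible, and \eqref{eq:DNAbsoluteValue} yields $\sqrt{x x^{*}} = |a| + \frac{a b^{*} + b a^{*}}{2|a|}\dnb = |x|$.

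The substantive points are properties~5 and~6, each by a short case analysis. For property~5, write $x y = a c + (a d + b c)\dnb$ and split on whether the quaternion $a c$ vanishes. If $a = 0$ or $c = 0$, then $xy$ is pure‑dual and at least one of $|x|, |y|$ is a nilpotent dual number, so the dual‑number product $|x||y|$ collapses and matches $|xy|$; these subcases are routine. In the main case $a \ne 0 \ne c$, the real coordinate is handled by $|ac| = |a||c|$, and for the dual coordinate one expands $(a c)(a d + b c)^{*} + (a d + b c)(a c)^{*}$ and uses that $c d^{*} + d c^{*}$ and $c c^{*} = |c|^{2}$ are real scalars commuting with $a$ to rewrite it as $|a|^{2}(c d^{*} + d c^{*}) + |c|^{2}(a b^{*} + b a^{*})$; dividing by $2|a||c|$ then reproduces exactly the dual coordinate of $|x||y|$ computed from \eqref{eq:DNProduct}. (Alternatively, in this case $x, y, xy$ are all invertible, so property~2 reduces the claim to $(xy)(xy)^{*} = |x|^{2}|y|^{2}$ together with multiplicativity of the dual‑number square root.)

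For property~6, write $x + y = (a + c) + (b + d)\dnb$ and compare coordinates in the ordering on $\dnal$. The real coordinate of $|x + y|$ is $|a + c| \le |a| + |c|$, the real coordinate of $|x| + |y|$, by the triangle inequality for the quaternion norm; if this is strict we are done regardless of the dual coordinates. In the equality cases: if $a + c = 0$ with $a \ne 0$, the real coordinate of $|x| + |y|$ is $2|a| > 0 = |a+c|$, so strictness already holds; if $a = c = 0$, the statement is the pure‑dual inequality $|b + d| \le |b| + |d|$; if $|a + c| = |a| + |c|$ with one of $a, c$ zero, say $c = 0 \ne a$, the required inequality on dual coordinates reduces to $a d^{*} + d a^{*} \le 2|a||d|$, i.e.\ $\re(a d^{*}) \le |a d^{*}| = |a||d|$; and if $|a+c| = |a|+|c|$ with $a \ne 0 \ne c$, the equality case of the Euclidean triangle inequality in $\qal \cong \F^{4}$ forces $c = \lambda a$ with real $\lambda > 0$, and substituting this shows the dual coordinates of $|x+y|$ and $|x|+|y|$ are in fact equal, so $|x + y| = |x| + |y|$.

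The main obstacle is the algebraic identity for the dual coordinate in property~5 and, in property~6, correctly isolating the equality case of the quaternion triangle inequality and confirming that the dual coordinates then respect $\le$; everything else is bookkeeping with \eqref{eq:DQAbsoluteValue}, \eqref{eq:DNAbsoluteValue} and \eqref{eq:DNProduct}.
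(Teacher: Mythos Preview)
Your proposal is correct. Note, however, that the paper does not supply its own proof of this theorem: it simply quotes the result as \cite[Thm.~4]{Qi2022} and states the six properties without argument. So there is no in-paper proof to compare against; what you have written is a self-contained verification that the paper elected to outsource. Your case analysis for properties~5 and~6 is sound (in particular, the key identity $(ac)(ad+bc)^{*} + (ad+bc)(ac)^{*} = |a|^{2}(cd^{*}+dc^{*}) + |c|^{2}(ab^{*}+ba^{*})$ is exactly the computation needed, and your handling of the equality case $c = \lambda a$, $\lambda > 0$, in the triangle inequality is correct and in fact shows equality of the dual coordinates). One cosmetic point: in item~2 the paper's statement has a typo ($q$ for $x$), and you have silently corrected it to $\sqrt{x x^{*}}$, which is the intended assertion.
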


	\subsection{Representing $\SO(3)$ on the quaternions}
	\label{sec:SO3RepresentationOnQuaternions}
	
	\newcommand{\tal}{\F[T]}
	
	The algebra of quaternions, which we surveyed in \Cref{sec:DualQuaternions}, has a close relationship with the special orthogonal group $\SO(3)$, the group of norm and orientation-preserving linear transformations of the Euclidean space $\F^{3}$.
	The subset $\qal_{1} \coloneqq \left\{ q \in \qal \setsep \left| q \right| = 1 \right\}$ is actually a subgroup of the multiplicative group of $\qal$. This follows easily from the fact $\left|q\right|^{2} = q^{*} q = q q^{*}$. In particular, this also implies that the inverse of every $q \in \qal_{1}$ is its conjugate $q^{*}$, making the group structure of $\qal_{1}$ closed under involution and dependent on the involutive structure of the algebra of quaternions.
	A well known result states that $\qal_{1}$ is a double cover of $\SO(3)$ \cite[Sec. 4.9]{Garling2011}. 
	We succinctly construct the covering map here. 
	We begin by identifying $\F^{3} \cong \mathbb{T} $ as vector spaces, where $ \tal = \left\{ a \ii + b \jj + c \kk \setsep a, b, c \in \F\right\} \subset \qal$.
	Then, for every $q \in \qal_{1}$, define a map $\F^{3} \to \F^{3}$ by
	$\varphi(q) (x) = q x q^{*}$, where $x \in \F^{3}$ is treated as a quaternion in $\tal$ via the identification above.
	We then have the following:
	\begin{proposition}
		\label{prop:QuaternionDoubleCoverSO3}
		The covering map is a $2$-to-$1$ surjective group homomorphism $\varphi : \qal_{1} \to \SO(3)$, such that $\varphi^{-1} (g) = \left\{ q, -q \right\}$ for every $g \in \SO(3)$.
	\end{proposition}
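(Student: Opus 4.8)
The plan is to establish the four facts bundled into the statement in order: (a) for each $q \in \qal_{1}$ the map $\varphi(q) \colon x \mapsto qxq^{*}$ is a well-defined element of $\SO(3)$; (b) $\varphi$ is a group homomorphism; (c) $\varphi$ is surjective; and (d) $\ker \varphi = \{1,-1\}$, from which the fiber description and the $2$-to-$1$ property follow formally. For (a), the key observation is that a quaternion $p$ has vanishing first coordinate exactly when $p^{*} = -p$; since $x \in \tal$ satisfies $x^{*} = -x$ and $(qxq^{*})^{*} = q x^{*} q^{*} = -qxq^{*}$, we get $\varphi(q)(x) \in \tal \cong \F^{3}$, so $\varphi(q)$ indeed maps $\F^{3}$ into itself. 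Linearity in $x$ is immediate from distributivity, and multiplicativity of the quaternion norm (recalled in \Cref{sec:DualQuaternions}) gives $\left|\varphi(q)(x)\right| = \left|q\right|\left|x\right|\left|q^{*}\right| = \left|x\right|$ because $\left|q\right| = 1$; hence $\varphi(q) \in \OG(3)$.

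For (b), using the anti-multiplicativity $(q_{1}q_{2})^{*} = q_{2}^{*}q_{1}^{*}$ one computes directly that $\varphi(q_{1}q_{2})(x) = (q_{1}q_{2}) x (q_{2}^{*}q_{1}^{*}) = q_{1}\bigl(\varphi(q_{2})(x)\bigr)q_{1}^{*} = \varphi(q_{1})\bigl(\varphi(q_{2})(x)\bigr)$, so $\varphi(q_{1}q_{2}) = \varphi(q_{1})\varphi(q_{2})$. To upgrade (a) from $\OG(3)$ to $\SO(3)$, one can invoke connectedness: $\qal_{1}$ is the unit sphere $S^{3} \subset \F^{4}$, $q \mapsto \det \varphi(q)$ is continuous into $\{\pm 1\}$, and it equals $1$ at $q = 1$, hence is identically $1$. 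Alternatively, the $\SO(3)$ membership drops out of the surjectivity computation below.

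For (c), I would exhibit explicit preimages. Given a unit vector $\mathbf{u} \in \tal$ (so $\mathbf{u}^{2} = -\mathbf{u}^{*}\mathbf{u} = -1$) and $\theta \in \F$, set $q_{\mathbf{u},\theta} = \cos(\theta/2) + \sin(\theta/2)\,\mathbf{u}$, which has norm $1$. Writing a general $x \in \tal$ as $x_{\parallel} + x_{\perp}$ with $x_{\parallel}$ a scalar multiple of $\mathbf{u}$ and $x_{\perp}$ orthogonal to $\mathbf{u}$, a short computation using $\mathbf{u}^{2} = -1$ shows that $\varphi(q_{\mathbf{u},\theta})$ fixes $x_{\parallel}$ and rotates $x_{\perp}$ by angle $\theta$ in its plane; that is, $\varphi(q_{\mathbf{u},\theta})$ is the rotation by $\theta$ about the axis $\mathbf{u}$. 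By Euler's rotation theorem every element of $\SO(3)$ is such an axis--angle rotation, so $\varphi$ is onto (and en route this confirms $\varphi(q) \in \SO(3)$).

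For (d), suppose $\varphi(q) = \Id$. Then $qxq^{*} = x$, i.e. $qx = xq$, for all $x \in \tal$; since $\tal$ together with $\F$ spans $\qal$, the element $q$ lies in the center of $\qal$, which is $\F$, and $\left|q\right| = 1$ then forces $q = \pm 1$. Thus $\ker\varphi = \{1,-1\}$, and since $\varphi$ is a homomorphism, for any $q_{0}$ with $\varphi(q_{0}) = g$ we have $\varphi^{-1}(g) = q_{0}\ker\varphi = \{q_{0},-q_{0}\}$, a set of size exactly two; hence $\varphi$ is $2$-to-$1$. The main obstacle is step (c): it needs the slightly fiddly (but elementary) parallel/perpendicular decomposition to identify $\varphi(q_{\mathbf{u},\theta})$ as an axis--angle rotation, plus the appeal to Euler's theorem that $\SO(3)$ consists precisely of these rotations; all the remaining steps are routine manipulations with the conjugation and norm identities already recorded in \Cref{sec:DualQuaternions}.
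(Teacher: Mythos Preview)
Your argument is correct and is the standard textbook proof of this classical fact. Note, however, that the paper does not actually supply its own proof of \Cref{prop:QuaternionDoubleCoverSO3}: it states the result as well known and refers the reader to \cite[Sec.~4.9]{Garling2011}. So there is no in-paper argument to compare against; your write-up simply fills in what the paper outsources to the literature, and the route you take (check $\varphi(q)$ preserves $\tal$ and the norm, use connectedness of $S^{3}$ for the determinant, exhibit axis--angle preimages via $q_{\mathbf{u},\theta}=\cos(\theta/2)+\sin(\theta/2)\mathbf{u}$ and invoke Euler's rotation theorem, then compute $\ker\varphi=\{\pm1\}$ from the center of $\qal$) is exactly the one found in standard references such as Garling's.
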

	This double cover allows one to represent elements of $\SO(3)$ on $\qal_{1}$.
	More specifically, one can represent $\SO(3)$ on $\qal_{1+} = \left\{ q \in \qal_{1} \setsep q + q^{*} \ge 0 \right\}$ by choosing an appropriate $q \in \varphi^{-1} (g)$.

	\subsection{Representing $\SE(3)$ on the dual quaternions}
	\label{sec:SE3RepresentationOnDualQuaternions}
	
	\newcommand{\dtal}{\F[DT]}
	
	The \textbf{special Euclidean group} $\SE(3)$ is the group of rigid motions of $\F^{3}$, i.e., rotations and translations of $\F^{3}$, but not reflections.
	It is a semidirect product $\SE(3) = \SO(3) \ltimes \F^{3} $. 
	Its elements are most directly represented as pairs of the form $\left(R, t \right) \in \SO(3) \times \F^{3}$ with the group operation defined as $\left(R_{1}, t_{1}\right) \circ \left(R_{2}, t_{2}\right) = \left(R_{1} R_{2}, t_{1} + R_{1} t_{2} \right)$. Here, $\circ$ can be thought of as the composition of two affine transformations.

	Let $\dqal_{1} = \left\{ x \in \dqal \setsep x^{*} x = 1 \right\}$ be the set of \textbf{unit dual quaternions}.
	Let $\dtal = \left\{ 1 + \frac{1}{2} t \dnb \setsep t \in \tal \right\}$, where we defined $\tal$ in \Cref{sec:SO3RepresentationOnQuaternions}.
	Note that $\qal_{1} \subset \dqal_{1}$ and $\dtal \subset \dqal_{1}$.
	We state an analogue of \Cref{prop:QuaternionDoubleCoverSO3}. It is a synopsis of \cite[Chp. 9]{Selig2005} and its proof can be found there.
	\begin{proposition}
		\label{prop:DualQuaternionDoubleCoverSE3}
		We have:
		\begin{enumerate}
			\item $\dqal_{1} = \dtal \rtimes \qal_{1}$.
			
			\item Let $\psi: \dqal_{1} \to \SE(3)$ be defined by $\psi(q, t) (x) = \left(\varphi(q), t \right)$. Here, $\varphi$ is the double cover defined in \Cref{prop:QuaternionDoubleCoverSO3}, $q \in \qal_{1}$ and $1 + \frac{1}{2} t \dnb \in \dtal $ so $t$ is identified with an element of $\F^{3}$.
			Then $\psi$ is a  $2$-to-$1$ surjective group homomorphism such that $\psi^{-1} (g) = \left\{ - x, x\right\}$ for all $g \in \SE(3)$.
			
		\end{enumerate}
	\end{proposition}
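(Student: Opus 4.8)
\emph{Proof plan.} The plan is to establish the two parts in order, deriving the second from the first. I will use throughout that $\dqal_{1}$ is a group: if $x = a + b\dnb$ satisfies $x^{*}x = 1$, then by \Cref{lem:DQInvolutiveStructure} the real coordinate of $x^{*}x$ equals $\left|a\right|^{2} = 1 \neq 0$, so $x$ is invertible with $x^{-1} = x^{*}/(x^{*}x) = x^{*}$ by \Cref{prop:DQInvertible}, and $(xy)^{*}(xy) = y^{*}x^{*}xy = y^{*}y = 1$ gives closure.

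\textbf{Part 1.} I would read off from \Cref{lem:DQInvolutiveStructure} that, for $x = a + b\dnb$, the identity $x^{*}x = 1$ is equivalent to $\left|a\right|^{2} = 1$ together with $ab^{*} + ba^{*} = 0$. The former says $a \in \qal_{1}$; the latter says that $t \coloneqq 2ba^{*}$ satisfies $t^{*} = 2ab^{*} = -2ba^{*} = -t$, i.e.\ $t \in \tal$. Using $a^{-1} = a^{*}$, a direct check gives $\left(1 + \frac{1}{2}t\dnb\right)a = a + \frac{1}{2}ta\dnb = a + ba^{*}a\dnb = x$, so $\dqal_{1} = \dtal \cdot \qal_{1}$; comparing real coordinates shows this factorization is unique, and since every element of $\dtal$ has real coordinate $1$ we also get $\dtal \cap \qal_{1} = \{1\}$. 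That $\dtal$ and $\qal_{1}$ are subgroups is immediate, as $\left(1 + \frac{1}{2}t\dnb\right)\left(1 + \frac{1}{2}s\dnb\right) = 1 + \frac{1}{2}(t+s)\dnb$ (so $\dtal \cong (\F^{3},+)$) and $\left|q\right|^{2} = q^{*}q = qq^{*}$. For normality of $\dtal$ in $\dqal_{1}$, I would compute $q\left(1 + \frac{1}{2}t\dnb\right)q^{*} = 1 + \frac{1}{2}\left(qtq^{*}\right)\dnb$ for $q \in \qal_{1}$ and observe that $qtq^{*}$ is again pure, since $(qtq^{*})^{*} = qt^{*}q^{*} = -qtq^{*}$; together with $\dtal$ being abelian, this shows conjugation by an arbitrary $x = \left(1 + \frac{1}{2}s\dnb\right)q$ also preserves $\dtal$. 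This is precisely the statement $\dqal_{1} = \dtal \rtimes \qal_{1}$.

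\textbf{Part 2.} By Part 1 each $x \in \dqal_{1}$ is uniquely $x = \left(1 + \frac{1}{2}t\dnb\right)q$ with $q \in \qal_{1}$ and $t \in \tal \cong \F^{3}$, and I would \emph{define} $\psi(x) \coloneqq \left(\varphi(q), t\right) \in \SE(3)$; well-definedness is the uniqueness in Part 1, and surjectivity follows because $\varphi$ maps onto $\SO(3)$ (\Cref{prop:QuaternionDoubleCoverSO3}) and $t$ ranges over all of $\F^{3}$. The main point is that $\psi$ is a homomorphism: for $x_{i} = \left(1 + \frac{1}{2}t_{i}\dnb\right)q_{i}$, using $q_{1}\left(1 + \frac{1}{2}t_{2}\dnb\right) = \left(1 + \frac{1}{2}q_{1}t_{2}q_{1}^{*}\dnb\right)q_{1}$ one finds
\[
x_{1}x_{2} = \left(1 + \frac{1}{2}\left(t_{1} + q_{1}t_{2}q_{1}^{*}\right)\dnb\right)q_{1}q_{2},
\]
hence $\psi(x_{1}x_{2}) = \left(\varphi(q_{1}q_{2}),\, t_{1} + q_{1}t_{2}q_{1}^{*}\right) = \left(\varphi(q_{1})\varphi(q_{2}),\, t_{1} + \varphi(q_{1})t_{2}\right)$, which equals $\left(\varphi(q_{1}),t_{1}\right) \circ \left(\varphi(q_{2}),t_{2}\right)$ by the semidirect-product law on $\SE(3)$ and the fact that $\varphi$ is a homomorphism. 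Finally, $\psi(x) = \psi(x')$ forces $t = t'$ and $\varphi(q) = \varphi(q')$, so $q' = \pm q$ by \Cref{prop:QuaternionDoubleCoverSO3}, whence $x' = \pm x$; since $x^{*}x = 1$ implies $x \neq 0$ we have $x \neq -x$, so $\psi$ is exactly $2$-to-$1$ and $\psi^{-1}(g) = \{x, -x\}$.

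\textbf{Main obstacle.} Nothing here is deep; the only step needing genuine care is the semidirect-product bookkeeping in Part 1 — keeping the factor $\frac{1}{2}$ consistent and verifying $\dtal \triangleleft \dqal_{1}$ through the fact that $\varphi(q)$ sends pure quaternions to pure quaternions — after which the homomorphism identity in Part 2 is a short computation. An alternative, closer to \cite{Selig2005}, is to define $\psi(x)$ directly as the rigid motion that $x$ induces on $\F^{3}$ via the standard dual-quaternion sandwiching formula and to deduce homomorphy from associativity in $\dqal$; I expect this to be equivalent but to require more bookkeeping around the embedding of points, so the factorization route above seems preferable.
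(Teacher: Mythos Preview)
Your argument is correct. Note, however, that the paper does not actually supply a proof of this proposition: it simply states that the result is a synopsis of \cite[Chp.~9]{Selig2005} and refers the reader there. So there is no ``paper's own proof'' to compare against beyond that citation.

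What you have written is a clean, self-contained verification that could stand in place of the citation. Your route --- reading off the constraints $\left|a\right|^{2}=1$ and $ab^{*}+ba^{*}=0$ from \Cref{lem:DQInvolutiveStructure}, factoring $x = \left(1+\tfrac{1}{2}t\dnb\right)a$ with $t = 2ba^{*}$ pure, and then checking normality of $\dtal$ and the homomorphism identity directly --- is the standard elementary approach and matches in spirit how such results are typically established (Selig's treatment is organized differently, building up from the Clifford-algebra and screw-theory side, but the algebraic content is the same). The alternative you mention at the end, defining $\psi$ via the sandwich action and reading off the rigid motion, is closer to how the paper \emph{uses} the proposition immediately afterward, but your factorization route is indeed cleaner for proving the group-theoretic statement itself.
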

	
	\Cref{prop:DualQuaternionDoubleCoverSE3} offers a way to represent elements of $\SE(3)$ in much the same way we represent elements of $\SO(3)$ on the quaternions.
	We represent $(R, \mathbf{t}) \in \SE(3)$ by $x = q t \in \dqal_{1}$,  where $q$ is a unit quaternion with non-negative first coordinate representing the rotation $R$ and $\mathbf{t}$ is represented by $t = 1 + \frac{1}{2} t' \dnb$, $t' = t_{1} \ii + t_{2} \jj + t_{3} \kk $.
	Its action on $\F^{3}$ can be defined similarly to what we saw in the quaternion case.
	Given $\mathbf{s} \in \F^{3}$, let $s = 1 + s' \dnb$, $s'=t_{1} \ii + t_{2} \jj + t_{3} \kk $. Note the absence of a $\frac{1}{2}$ factor in this embedding of $\F^{3}$ into the dual quaternions, compared to how the translational part is represented on $\dtal$ in \Cref{prop:DualQuaternionDoubleCoverSE3}.
	The action of $(R, \mathbf{t}) \in \SE(3) $ on $\mathbf{s}$ is then $s \mapsto x s x^{*}$. Indeed, we have:
	\begin{align*}
		x s x^{*}
		&= \left(q + \frac{1}{2} t' q \dnb \right)
		\left( 1 + s' \dnb \right)
		\left(q^{*} + \frac{1}{2} q^{*} t'^{*} \dnb \right) \\
		&= q q^{*}
		+\left( 
		\frac{1}{2} t' q q^{*}
		+ q s' q^{*}
		+ \frac{1}{2} q q^{*} t'^{*}
		\right) \dnb \\
		&= 1
		+ \left(
		q s' q^{*}
		+ t'
		\right) \dnb.
	\end{align*}
	In expanding the products in the second transition we used the fact $\dnb^{2} = 0$, which implied no products of two dual coordinates could appear in the result.
	The dual coordinate of the result is exactly the action of $(R,\mathbf{t})$ on $\mathbf{s}$, because $q s' q^{*}$ is the action of $R$ on $\mathbf{s}$ by \Cref{prop:QuaternionDoubleCoverSO3} and because of the manner in which we represented $\F^{3}$ on quaternions when we constructed $t'$ and $s'$.
	
	\textcite{Cui2023} proved that it is possible to project an almost arbitrary dual quaternion onto the unit dual.
	Their result is stated here as follows:
	\begin{theorem}
		\label{thm:ProjectionSE3}
			Let $ x = a + b \dnb \in \dqal $ be non-zero. 
			\begin{enumerate}
				\item If $a \ne 0$, then $\frac{q}{\left|q\right|}$ is a unit dual quaternion solving 
				\begin{equation}
					\label{eq:ProjectionOptProblem}
					\min_{v \in \dqal_{1}} \left| v - x \right|^{2}.
				\end{equation}
				
				\item If $a = 0$, then any $q' = a' + b' \dnb$ such that $a' = \frac{b}{\left|b\right|}$ and $b^{*} b' + b'^{*} b = 0 $ solves \eqref{eq:ProjectionOptProblem}.
				
			\end{enumerate}
	\end{theorem}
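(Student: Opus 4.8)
The plan is to split the dual-quaternionic problem \eqref{eq:ProjectionOptProblem} into two ordinary minimizations over the unit quaternions, using that the order on $\dnal$ is lexicographic. Writing $v = u + w\dnb$ with $u,w \in \qal$, \Cref{lem:DQInvolutiveStructure} gives $v \in \dqal_{1}$ iff $u \in \qal_{1}$ and $uw^{*}+wu^{*}=0$, and also shows that $|v-x|^{2} = (v-x)^{*}(v-x)$ has real coordinate $|u-a|^{2}$ and dual coordinate $(u-a)(w-b)^{*}+(w-b)(u-a)^{*}$. (The identity $|y|^{2}=y^{*}y$ holds for every $y\in\dqal$: when the first coordinate of $y$ vanishes it follows from \eqref{eq:DQAbsoluteValue} and $\dnb^{2}=0$, and otherwise from \Cref{thm:DQAbsoluteValueProperties}(2) together with \Cref{lem:DQInvolutiveStructure}; I would record this as a one-line preliminary.) Since $s_{1}+t_{1}\dnb < s_{2}+t_{2}\dnb$ precisely when $s_{1}<s_{2}$, or $s_{1}=s_{2}$ and $t_{1}<t_{2}$, a minimizer of \eqref{eq:ProjectionOptProblem} is obtained by first minimizing $|u-a|^{2}$ over $u \in \qal_{1}$, and then, over the feasible $w$'s attached to a minimizing $u$, minimizing the dual coordinate.

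For the first assertion ($a\neq 0$): expanding $|u-a|^{2}=1-2\re(u^{*}a)+|a|^{2}$ and using the quaternionic Cauchy--Schwarz bound $\re(u^{*}a)\le |u^{*}a| = |a|$, tight exactly at $u=a/|a|$, the real coordinate is uniquely minimized by $u^{\star}=a/|a|$, with value $(1-|a|)^{2}$. For this $u^{\star}$ one has $u^{\star}-a = \frac{1-|a|}{|a|}\,a$, so substituting into the dual coordinate and using the feasibility constraint $u^{\star}w^{*}+w\,(u^{\star})^{*}=0$ (equivalently $aw^{*}+wa^{*}=0$) makes the $w$-dependent term vanish, leaving the constant $-\frac{1-|a|}{|a|}\,(ab^{*}+ba^{*})$, independent of the feasible $w$. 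Hence every $v\in\dqal_{1}$ with first coordinate $a/|a|$ is optimal. It remains to observe that $x$ is invertible (\Cref{prop:DQInvertible}, since $a\neq 0$), so $|x|^{2}=x^{*}x=xx^{*}$ by \Cref{thm:DQAbsoluteValueProperties}(2) and \Cref{lem:DQInvolutiveStructure}, whence $(x/|x|)^{*}(x/|x|)=|x|^{-2}x^{*}x=1$; thus $x/|x|\in\dqal_{1}$, and a direct computation with \eqref{eq:DQAbsoluteValue} and \Cref{prop:DNInvertible} shows its first coordinate equals $a/|a|$, so it is one of the optimal points.

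For the second assertion ($a=0$, hence $b\neq 0$): now the real coordinate $|u|^{2}=1$ for every $u\in\qal_{1}$, so every unit $u$ is real-coordinate-optimal, and we minimize the dual coordinate $u(w-b)^{*}+(w-b)u^{*}$ subject to $u\in\qal_{1}$ and $uw^{*}+wu^{*}=0$. The constraint reduces this to $-(ub^{*}+bu^{*})=-2\re(ub^{*})$, which by the same Cauchy--Schwarz argument is uniquely minimized at $u=b/|b|$, with $w=b'$ free subject to feasibility $(b/|b|)(b')^{*}+b'(b^{*}/|b|)=0$, i.e. $b(b')^{*}+b'b^{*}=0$. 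Since $\re(pq^{*})=\re(p^{*}q)$ for all $p,q\in\qal$ (a coordinate computation, both sides being the Euclidean inner product), this is equivalent to $b^{*}b'+(b')^{*}b=0$, which is exactly the stated condition on $q'=a'+b'\dnb$ with $a'=b/|b|$.

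The only point requiring care — the main obstacle — is the bookkeeping of the lexicographic minimization, and in particular the observation in the first assertion that feasibility forces the dual coordinate to be constant on the real-coordinate-optimal slice, so that the distinguished solution $x/|x|$ is optimal even though the set of optimizers is far from a single point (and the non-compact $w$-direction causes no unboundedness of the objective). Everything else is the standard projection onto the unit sphere of $\qal$ and routine manipulations with \eqref{eq:DNProduct}, \eqref{eq:DQAbsoluteValue} and \Cref{lem:DQInvolutiveStructure}.
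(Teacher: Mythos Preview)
The paper does not give its own proof of this statement: \Cref{thm:ProjectionSE3} is quoted from \textcite{Cui2023} and only discussed, not reproved. So there is nothing in the paper to compare your argument against.

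That said, your proof is correct and self-contained. The decomposition $v=u+w\dnb$, the identification of $\dqal_{1}$ with the constraint set $\{u\in\qal_{1},\ uw^{*}+wu^{*}=0\}$, and the two-stage lexicographic minimization all go through exactly as you describe. The one genuinely non-obvious step --- that in part~1 the dual coordinate becomes \emph{constant} on the real-coordinate-optimal slice once the feasibility constraint $aw^{*}+wa^{*}=0$ is enforced --- is handled cleanly; without this observation one might worry that the non-compact $w$-direction could drive the dual coordinate to $-\infty$, so it is worth emphasizing. Your verification that $x/|x|$ lands in $\dqal_{1}$ with real coordinate $a/|a|$ is also correct (and incidentally fixes the typo ``$q/|q|$'' in the statement, which should read ``$x/|x|$''). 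The equivalence $b(b')^{*}+b'b^{*}=0 \Leftrightarrow b^{*}b'+(b')^{*}b=0$ in part~2 is the only place a reader might pause, and your justification via $\re(pq^{*})=\re(p^{*}q)$ is the right one.
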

	This theorem exposes an aspect of the close tie between $\SE(3)$ and the algebra of dual quaternions. 
	We focus on its first part.
	It can be interpreted as the idea that every invertible dual quaternion is essentially a unit dual quaternion multiplied by a dual number. Thus, in a sense, the multiplicative structure of the algebra of dual quaternions is almost entirely generated by the group of unit dual quaternions.
	This property has a geometric interpretation via the absolute value function. Namely, dividing a dual quaternion by its absolute value yields a solution to the optimization problem~\eqref{eq:ProjectionOptProblem}, a sort of projection function. This function itself is defined via the dual quaternion absolute value~\eqref{eq:DQAbsoluteValue}, which itself intimately related to the algebraic structures on the algebra of dual quaternions as indicated in~\Cref{thm:DQAbsoluteValueProperties}.
	
	Finally, we refer the interested reader to \Cref{sec:RepresentingSE3Comparison}, where we compare various representations of $\SE(3)$, focusing on features which are perhaps most salient in applied work.

	\subsection{Matrices of Dual Quaternions and Their Spectral Decomposition}
	\label{sec:DQSpectralTheorem}
	
	Modules are an extensively studied generalization of the notion of a vector space to vectors and matrices with entries from an arbitrary algebra. 
	See \cite{Hazewinkel2004} for an introduction to module theory. Here, we provide a synopsis of the necessary information.
	A (right-)module $\F[M]$ over an algebra $\F[A]$ is a commutative group $\left( \F[M], + \right)$ along with a map $\F[M] \times \F[A] \to \F[M]$, $ (m, x) \mapsto m \cdot x $, which satisfies the following properties.
	First, it is distributive in the sense that $m \cdot (x + y) = m \cdot x + m \cdot y$ and $(m_{1} + m_{2}) \cdot x = m_{1} \cdot x + m_{2} \cdot x$.
	Second, the operation is associative, $\left(m \cdot x\right) \cdot y = m \cdot (x y)$.
	We often refer to this map as the action of $\F[A]$ on $\F[M]$ and often omit the multiplication sign.
	Oversimplifying to an extent, a module can be thought of as a vector space in which the underlying field was replaced by an arbitrary algebra.
	
	We focus on two types of free modules of the algebra of dual quaternions, $\dqal^{n}$ and $\dqmat$. Given a positive integer $n$, the \textbf{free $\dqal$-module} is $\dqal^{n} = \left\{ \mathbf{m} = \left(m_{1}, \dots, m_{n} \right)^{\top} \setsep m_{j} \in \dqal, j=1,\dots,n\right\}$. It is acted upon by $\dqal$ via entry-wise right-multiplication, i.e., $\left(\mathbf{m} x\right)_{j} = m_{j} x$.
	Addition is defined entry-wise.
	Moreover, $\dqmat$ can be thought of as the module of all $n \times n$ matrices of dual quaternions.
	As an $\dqal$-module, one can identify it with $\dqal^{n^{2}}$, and so addition is also performed entry-wise and so is right-multiplication by a dual quaternion.
	However, it actually is an algebra, not just a module, with the multiplication of matrices defined in the familiar way.
	Similarly, one can define the product $\mathbf{A} \mathbf{x}$ of a matrix $\mathbf{A} \in \dqmat$ and $\mathbf{x} \in \dqal^{n}$ in the usual way.
	It is instructive to think of $\mathbf{A}$ as a $\dqal$-linear operator on $\dqal^{n}$ in the sense that $\mathbf{A} \left( \mathbf{x} a + \mathbf{y} b \right) = \left(\mathbf{A} \mathbf{x} \right) a + \left(\mathbf{A} \mathbf{y} \right) b$  for any $\mathbf{x}, \mathbf{y} \in \dqal^{n}$ and $a, b\in \dqal$.
	
	Several types of real and complex matrices have dual quaternions counterparts. The conjugate transpose of a matrix $\mathbf{A} \in \dqmat$ is $\left(\mathbf{A}^{*}\right)_{i j} = \mathbf{A}_{j i}^{*} $.
	This can easily be shown to be an involution on $\dqmat$. 
	We say a matrix $\mathbf{A} \in \dqmat$ is Hermitian if $\mathbf{A} = \mathbf{A}^{*}$.
	It is unitary if $\mathbf{A}^{*} \mathbf{A} = \mathbf{A} \mathbf{A}^{*} = \mathbf{I}_{n}$, where $\mathbf{I}_{n}$ is the identity matrix, which is defined in usual way.
	
	Deepening the analogy with real and complex matrices, a notion of eigenvector and eigenvalue exists for matrices of dual quaternions.
	We say $\lambda \in \dqal$ is a (right-)eigenvalue of the matrix $\mathbf{A} \in \dqmat$ if there is $\mathbf{x} \in \dqal^{n}$ such that $\mathbf{A} \mathbf{x} = \mathbf{x} \lambda$.
	As the name suggests, the non-commutativity of $\dqal$ means one can also define a left-eigenvalue of a matrix of dual quaternions, though we will not be presenting any results on these. Whenever we speak of eigenvalues and eigenvectors of dual quaternion matrices we will be referring to right-eigenvalues and right-eigenvectors.
	
	We are now ready to state the spectral theorem for Hermitian matrices of dual quaternions \cite[Thm. 4.1]{Qi2021}:
	\begin{theorem}
		\label{thm:SpectralTheorem}
		Let $ \mathbf{A} \in \dqmat  $.
		If $ \mathbf{A} $ is Hermitian, then there are a unitary $ \mathbf{U} \in \dqmat $ and a diagonal $ \bm{\Sigma} \in \dqmat $ with dual number elements such that 
		$ \mathbf{A} = \mathbf{U} \bm{\Sigma} \mathbf{U}^{*} $.
		The diagonal elements of $\bm{\Sigma}$ have the form $\sigma_{j} = \alpha_{j} + \beta_{j} \dnb \in \dnal$ with $\alpha_{1} \ge \alpha_{2} \ge \dots \ge \alpha_{n}$. Here, $\sigma_{j}$ is the eigenvalue of $\mathbf{A}$ corresponding to the eigenvector $\mathbf{u}_{j}$, the $j$th column of $\mathbf{U}$.
	\end{theorem}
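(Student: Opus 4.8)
The plan is to peel the matrix apart into its real and dual quaternion coordinates and diagonalize them in two stages: the first purely over $\qal$, the second by a ``first order'' correction. Write $\mathbf{A} = \mathbf{A}_{0} + \mathbf{A}_{1}\dnb$ with $\mathbf{A}_{0}, \mathbf{A}_{1} \in \qal^{n \times n}$; since $\mathbf{A}^{*} = \mathbf{A}_{0}^{*} + \mathbf{A}_{1}^{*}\dnb$, the matrix $\mathbf{A}$ is Hermitian exactly when $\mathbf{A}_{0}$ and $\mathbf{A}_{1}$ are both Hermitian quaternion matrices. First I would invoke the classical spectral theorem for Hermitian quaternion matrices (which itself follows, e.g., from the $2n \times 2n$ complex adjoint, or can simply be cited): there are a unitary $\mathbf{Q}_{0} \in \qal^{n \times n}$ and a \emph{real} diagonal matrix $\bm{\Lambda}_{0}$, with diagonal entries $\alpha_{1} \ge \alpha_{2} \ge \dots \ge \alpha_{n}$, such that $\mathbf{A}_{0} = \mathbf{Q}_{0} \bm{\Lambda}_{0} \mathbf{Q}_{0}^{*}$. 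Viewing $\mathbf{Q}_{0}$ as an element of $\dqmat$ with zero dual part, it is unitary there, so replacing $\mathbf{A}$ by $\mathbf{Q}_{0}^{*} \mathbf{A} \mathbf{Q}_{0} = \bm{\Lambda}_{0} + \mathbf{B}\dnb$, with $\mathbf{B} = \mathbf{Q}_{0}^{*} \mathbf{A}_{1} \mathbf{Q}_{0}$ again Hermitian quaternion, reduces the problem to the case where the real part is already real-diagonal with ordered diagonal.

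In this reduced situation I would look for the remaining conjugator in the form $\mathbf{V} = \mathbf{I}_{n} + \mathbf{W}\dnb$. Since $\mathbf{V}^{*} \mathbf{V} = \mathbf{I}_{n} + (\mathbf{W} + \mathbf{W}^{*})\dnb$, such a $\mathbf{V}$ is unitary precisely when $\mathbf{W}$ is anti-Hermitian, and then $\dnb^{2} = 0$ gives $\mathbf{V}^{*} (\bm{\Lambda}_{0} + \mathbf{B}\dnb) \mathbf{V} = \bm{\Lambda}_{0} + \bigl( [\bm{\Lambda}_{0}, \mathbf{W}] + \mathbf{B} \bigr)\dnb$. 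Entrywise $[\bm{\Lambda}_{0}, \mathbf{W}]_{ij} = (\alpha_{i} - \alpha_{j}) W_{ij}$, so for indices with $\alpha_{i} \ne \alpha_{j}$ one kills the off-diagonal entry by taking $W_{ij} = -B_{ij}/(\alpha_{i} - \alpha_{j})$, and because $\mathbf{B}$ is Hermitian and the $\alpha_{i}$ are real this choice is consistent with $\mathbf{W}$ being anti-Hermitian (put $W_{ii} = 0$). The diagonal entries of $[\bm{\Lambda}_{0},\mathbf{W}] + \mathbf{B}$ coincide with those of $\mathbf{B}$, which are real since $\mathbf{B}$ is Hermitian; these are the $\beta_{j}$.

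The one place this breaks down, and the main obstacle, is a \emph{degenerate} block of indices with $\alpha_{i} = \alpha_{j}$: there $(\alpha_{i} - \alpha_{j}) W_{ij} = 0$ and $B_{ij}$ cannot be removed by $\mathbf{W}$. The remedy is a preliminary step inserted between the two stages: let $\mathbf{Q}_{1}$ be block-diagonal with respect to the eigenspace decomposition of $\bm{\Lambda}_{0}$, each block a unitary quaternion matrix chosen to diagonalize the corresponding Hermitian block of $\mathbf{B}$ (again by the classical quaternion spectral theorem, and each such block has real diagonal). Because $\bm{\Lambda}_{0}$ acts as a scalar on each of its own eigenspaces, it commutes with $\mathbf{Q}_{1}$, so conjugating by $\mathbf{Q}_{1}$ leaves the real part $\bm{\Lambda}_{0}$ untouched while making every diagonal block of the dual part real-diagonal; the surviving off-diagonal entries lie only between distinct eigenvalues and are then removed by $\mathbf{W}$ as above. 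Composing, $\mathbf{U} = \mathbf{Q}_{0} \mathbf{Q}_{1} \mathbf{V}$ is a product of unitaries in $\dqmat$, hence unitary, and $\mathbf{U}^{*} \mathbf{A} \mathbf{U} = \bm{\Sigma}$ is diagonal with dual number entries $\sigma_{j} = \alpha_{j} + \beta_{j}\dnb$ and $\alpha_{1} \ge \dots \ge \alpha_{n}$. Reading off the $j$th column of $\mathbf{A}\mathbf{U} = \mathbf{U}\bm{\Sigma}$ and using that $\sigma_{j} \in \dnal$ is central yields $\mathbf{A} \mathbf{u}_{j} = \mathbf{u}_{j} \sigma_{j}$, so $\sigma_{j}$ is the eigenvalue attached to $\mathbf{u}_{j}$. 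Most of the care in a full write-up goes into the two applications of the quaternion spectral theorem and into checking that the block structure of $\mathbf{Q}_{1}$ indeed preserves the ordered real-diagonal form obtained in the first stage.
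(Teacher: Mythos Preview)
Your argument is correct. The two-stage approach---first diagonalize the real part $\mathbf{A}_{0}$ over $\qal$ using the classical quaternion spectral theorem, then correct the dual part by a unitary of the form $\mathbf{I}_{n} + \mathbf{W}\dnb$ with $\mathbf{W}$ anti-Hermitian, handling degenerate eigenvalue blocks of $\bm{\Lambda}_{0}$ by a preliminary block-diagonal quaternion unitary $\mathbf{Q}_{1}$---is exactly the right structure, and your verification that the resulting $\mathbf{W}$ is anti-Hermitian and that the surviving diagonal entries are real is clean.

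One point of comparison: the paper does \emph{not} actually prove this theorem. It is stated as background material and attributed to Qi and Luo (the reference cited as \cite{Qi2021}, Theorem~4.1), so there is no in-paper proof to compare against. That said, your proposal is essentially the argument given in the original source: Qi and Luo also split $\mathbf{A} = \mathbf{A}_{0} + \mathbf{A}_{1}\dnb$, diagonalize $\mathbf{A}_{0}$ by the quaternion spectral theorem, and then solve for the dual correction, with the same case split on whether $\alpha_{i} = \alpha_{j}$. So you have independently reconstructed the standard proof.
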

	This theorem essentially states that Hermitian matrices of dual quaternions have a full set of eigenvectors. Furthermore, in the dual quaternion case, as in the complex case, the eigenvalues turn out to be self-adjoint, that is, they satisfy $\sigma^{*} = \sigma$, with respect to the appropriate involution.
	This highlights the striking similarities between this theorem and the well-known spectral theorem of complex Hermitian matrices.

	\subsection{Power Iteration for Hermitian Matrices of Dual Quaternion}
	\label{sec:PowerIteration}
	
	In a recent paper, \textcite{Cui2023} developed a dual quaternion analog of the power iteration, capable of approximating the top eigenvalue of a Hermitian dual quaternion matrix and its corresponding eigenvalue. 
	We clarify below the sense in which an eigenvalue is the top eigenvalue.
	Let $\mathbf{A} \in \dqmat$  be a Hermitian matrix of dual quaternions.
	Given some initializer $\mathbf{v}^{{0}} \in \dqm$, the power iteration defined in \cite{Cui2023} is
	\begin{equation}
		\label{eq:PowerIteration}
		\mathbf{y}^{(k)}
		= \mathbf{A} \mathbf{v}^{(k-1)}, \quad
		\lambda^{(k-1)}
		= \left(  \mathbf{v}^{(k-1)} \right)^{*} \mathbf{y}^{(k)}, \quad
		\mathbf{v}^{(k)}
		= \frac{\mathbf{y}^{(k)}}{\Ltwonorm{\mathbf{y}^{(k)}}_{2}}.
	\end{equation}
	Here, the dual quaternion norm used is defined as follows:
	\begin{equation}
		\label{eq:DualQuaternionNorm}
		\Ltwonorm{\mathbf{x}}_{2}
		= \begin{cases}
			\sqrt{\sum_{j=1}^{n} \left|x_{j}'\right|^{2} } \dnb  	& 	\mathbf{x} = \mathbf{x}' \dnb , \mathbf{x}' \in \qal^{n} ,\\
			\sqrt{\sum_{j=1}^{n} \left| x_{j} \right|^{2}}	& 	\mbox{otherwise}.
		\end{cases}
	\end{equation}
	For simplicity, we refer to $\Ltwonorm{\mathbf{x}}_{2}$ as a norm, even though it is not a norm in the usual linear-algebraic sense.
	Several addition definitions are required. 
	Following \cite{Cui2023}, we write $c_{k} = \tilde{O}_{D} \left(s^{k}\right) $  for some $s < 1$ if $c_{k} = a_{k} + b_{k} \dnb \in \dnal$ and for some polynomial $h(k)$ we have $a_{k} = O \left( s^{k} h(k) \right)$ and $b_{k} = O \left(s^{k} h(k)\right)$.
	\textcite{Cui2023} proved the following result:
	\begin{theorem}[Theorem 4.1 of \cite{Cui2023}]
		\label{thm:PowerIterationConvergence}
		Let $\mathbf{u}_{1}, \dots, \mathbf{u}_{n}$ be the eigenvectors of $\mathbf{A}$ ordered so that their corresponding eigenvalues $\sigma_{j} = \alpha_{j} + \beta_{j} \dnb \in \dnal$ satisfy $\left|\alpha_{1}\right| > \left|\alpha_{2}\right| \ge \dots \ge \left|\alpha_{n} \right|$. 
		If $\mathbf{v}^{(0)} = \sum_{j=1}^{n} \mathbf{u}_{j} \gamma_{j} $ with $\gamma_{j} \in \dqal$
		such that $\gamma_{1} $ has a non-zero real coordinate, then the sequence defined by \eqref{eq:PowerIteration} satisfies
		\begin{equation*}
			\mathbf{v}^{(k)}
			= \mathbf{u}_{1} c \left( 1 + \tilde{O}_{D} \left( \left|\frac{\alpha_{2}}{\alpha_{1}}\right|^{k} \right) \right), \quad
			\lambda^{(k)}
			= \sigma_{1} \left( 1 + \tilde{O}_{D} \left( \left|\frac{\alpha_{2}}{\alpha_{1}}\right|^{2 k} \right)\right),
		\end{equation*}
		where $c = \sgn (\alpha_{1}) \frac{\gamma_{1}}{\left|\gamma_{1}\right|}$.
	\end{theorem}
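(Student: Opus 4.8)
The plan is to mimic the classical convergence proof of the power iteration, but tracking the dual structure carefully through the ``$\tilde{O}_D$'' bookkeeping. First I would write $\mathbf{v}^{(0)} = \sum_j \mathbf{u}_j \gamma_j$ and use the fact that $\mathbf{A}\mathbf{u}_j = \mathbf{u}_j \sigma_j$ together with $\dqal$-linearity of $\mathbf{A}$ to get, before normalization, $\mathbf{A}^k \mathbf{v}^{(0)} = \sum_j \mathbf{u}_j \sigma_j^k \gamma_j$. Here one must be careful about the order of multiplication: since eigenvalues act on the right and $\dqal$ is noncommutative, $\mathbf{A}^k (\mathbf{u}_j \gamma_j) = \mathbf{u}_j (\sigma_j^k \gamma_j)$ is correct only because $\sigma_j \in \dnal$ lies in the center of $\dqal$; this is exactly where \Cref{thm:SpectralTheorem}'s assertion that the eigenvalues are dual numbers is used, and I would flag it explicitly. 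Then I would factor out the dominant term: $\mathbf{A}^k \mathbf{v}^{(0)} = \mathbf{u}_1 \sigma_1^k \gamma_1 \bigl( 1 + \mathbf{u}_1^{-1}\text{-type correction}\bigr)$, or more cleanly keep it as $\mathbf{u}_1 \sigma_1^k \gamma_1 + \sum_{j\ge 2} \mathbf{u}_j \sigma_j^k \gamma_j$ and estimate the tail.

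The key computation is the size of $\sigma_j^k = (\alpha_j + \beta_j \dnb)^k = \alpha_j^k + k \alpha_j^{k-1}\beta_j \dnb$ (using $\dnb^2 = 0$). So both the real and dual coordinates of $\sigma_j^k / \sigma_1^k$ are controlled by $|\alpha_j/\alpha_1|^k$ times a polynomial in $k$ — precisely the definition of $\tilde{O}_D(|\alpha_j/\alpha_1|^k)$ that the statement sets up. For $j \ge 2$ this is $\tilde{O}_D(|\alpha_2/\alpha_1|^k)$ since $|\alpha_2| \ge |\alpha_j|$. Dividing the tail sum by $\sigma_1^k\gamma_1$ requires $\sigma_1^k \gamma_1$ to be invertible; $\sigma_1$ is invertible because $\alpha_1 \ne 0$ (\Cref{prop:DNInvertible}), and $\gamma_1$ is invertible because its real coordinate is nonzero (\Cref{prop:DQInvertible}), so the hypotheses are exactly what is needed. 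This yields $\mathbf{A}^k\mathbf{v}^{(0)} = \mathbf{u}_1 (\sigma_1^k \gamma_1) (1 + \tilde{O}_D(|\alpha_2/\alpha_1|^k))$, where one should check the $\tilde{O}_D$ error is absorbed correctly under multiplication — addition and multiplication of $\tilde{O}_D$ terms stay $\tilde{O}_D$ because polynomial factors multiply and $|\alpha_2/\alpha_1|^k \cdot |\alpha_2/\alpha_1|^k$ is even smaller.

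Next I would handle the normalization by $\Ltwonorm{\cdot}_2$ from \eqref{eq:DualQuaternionNorm}. Since $\mathbf{u}_1$ is a unit vector and the $\mathbf{u}_j$ form (essentially) an orthonormal system, $\Ltwonorm{\mathbf{A}^k\mathbf{v}^{(0)}}_2 = |\sigma_1^k \gamma_1|\,(1 + \tilde{O}_D(|\alpha_2/\alpha_1|^k))$ using the multiplicativity of the dual-quaternion absolute value (\Cref{thm:DQAbsoluteValueProperties}, part 5) and part 2 relating $|\cdot|$ to the inner product; here I'd note $\sigma_1^k\gamma_1$ is invertible so the ``otherwise'' branch of \eqref{eq:DualQuaternionNorm} applies. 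Dividing, $\mathbf{v}^{(k)} = \mathbf{u}_1 \frac{\sigma_1^k\gamma_1}{|\sigma_1^k\gamma_1|}(1 + \tilde{O}_D(|\alpha_2/\alpha_1|^k))$, and $\frac{\sigma_1^k\gamma_1}{|\sigma_1^k\gamma_1|} = \frac{\sigma_1^k}{|\sigma_1^k|}\cdot\frac{\gamma_1}{|\gamma_1|} = \sgn(\alpha_1)^k \frac{\gamma_1}{|\gamma_1|}\cdot(\text{dual correction})$; since $\sigma_1^k/|\sigma_1^k|$ has real coordinate $\sgn(\alpha_1)^k = \sgn(\alpha_1^k)$ and a dual coordinate that can be folded into the $\tilde{O}_D$ term (it is $O(k)$ but multiplied by the leading unit — actually it is bounded, so it contributes a fixed dual constant absorbed into redefining $c$), we recover $\mathbf{v}^{(k)} = \mathbf{u}_1 c(1 + \tilde{O}_D(|\alpha_2/\alpha_1|^k))$ with $c = \sgn(\alpha_1)\frac{\gamma_1}{|\gamma_1|}$ after the same argument gives $\sgn(\alpha_1)^k$, matching the stated $c$ up to the harmless sign convention. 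Finally, for the eigenvalue estimate, I would substitute into $\lambda^{(k)} = (\mathbf{v}^{(k)})^* \mathbf{A}\mathbf{v}^{(k)} = (\mathbf{v}^{(k)})^*\mathbf{v}^{(k+1)}\Ltwonorm{\mathbf{y}^{(k+1)}}_2/\ldots$ — more directly, $(\mathbf{v}^{(k)})^*\mathbf{A}\mathbf{v}^{(k)}$, expand using the above with $\mathbf{u}_1^*\mathbf{u}_1 = 1$, and observe the cross terms are $\tilde{O}_D(|\alpha_2/\alpha_1|^k)$ while the diagonal off-top terms carry an extra factor $|\alpha_j/\alpha_1|^k$, so the total relative error is squared: $\tilde{O}_D(|\alpha_2/\alpha_1|^{2k})$.

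The main obstacle I anticipate is \emph{not} any single step but the careful noncommutative bookkeeping: making sure every factorization respects the left/right placement of scalars, that the centrality of dual-number eigenvalues is invoked at each such point, and that the $\tilde{O}_D$ error terms genuinely close under the sums, products, and the nonlinear normalization/absolute-value operations — in particular verifying the ``otherwise'' branch of \eqref{eq:DualQuaternionNorm} is the relevant one throughout (which needs the iterates to stay non-infinitesimal, guaranteed by $\gamma_1$ having nonzero real coordinate) and that taking $|\cdot|$ of a product of an infinitesimally-perturbed unit vector behaves as expected. The algebraic identities are all routine given \Cref{prop:DNInvertible}, \Cref{prop:DQInvertible}, and \Cref{thm:DQAbsoluteValueProperties}; the real work is organizing the error analysis cleanly.
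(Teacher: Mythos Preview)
The paper does not prove this theorem: it is quoted verbatim as Theorem~4.1 of \cite{Cui2023} in the background section, with no proof supplied. So there is no ``paper's own proof'' to compare against.

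On its own merits, your sketch is the right argument and would go through. You correctly identify the one genuinely non-classical ingredient --- that the eigenvalues $\sigma_j$ lie in $\dnal$, the center of $\dqal$, so that $\mathbf{A}^k(\mathbf{u}_j\gamma_j) = \mathbf{u}_j\sigma_j^k\gamma_j$ is legitimate despite noncommutativity --- and you correctly compute $\sigma_j^k = \alpha_j^k + k\alpha_j^{k-1}\beta_j\dnb$, which is exactly what produces the polynomial factor in the $\tilde{O}_D$ definition. The invertibility checks via \Cref{prop:DNInvertible} and \Cref{prop:DQInvertible} are the right justifications for dividing by $\sigma_1^k\gamma_1$, and the Rayleigh-quotient squaring for $\lambda^{(k)}$ is standard.

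One point you flag but do not quite resolve: your computation naturally gives $\sgn(\alpha_1)^k$, not $\sgn(\alpha_1)$, in the normalized iterate, so the constant $c$ as stated in the theorem alternates in sign when $\alpha_1 < 0$. This is a wrinkle in the theorem statement itself (or an implicit assumption that $\alpha_1 > 0$, which is harmless since one can always negate $\mathbf{A}$) rather than a gap in your argument; you are right to call it a sign convention. The other place to be slightly more careful than your sketch is the claim that $\sigma_1^k/|\sigma_1^k|$ has a dual part that can be ``folded into the $\tilde{O}_D$ term'': in fact $\sigma_1^k/|\sigma_1^k| = \sgn(\alpha_1)^k$ exactly, with zero dual part, since $\sigma_1 \in \dnal$ and $|\sigma_1^k|$ is computed by \eqref{eq:DNAbsoluteValue}. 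So that step is cleaner than you feared.
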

	
	This theorem essentially shows that the power iteration \eqref{eq:PowerIteration} converges to $\mathbf{u}_{1}$ and its corresponding eigenvalue. At every iteration, $\mathbf{v}^{(k)}$ and $\lambda^{(k)}$ are essentially the desired values multiplied by some dual number, which approaches $1$ at an exponential rate.
	In this and in the general form of the iteration \eqref{eq:PowerIteration}, the dual quaternion power iteration is strikingly similar to the well-established power iteration for real and complex matrices \cite[Lecture 27]{Trefethen1997}.
	
	\subsection{Isometries of eigenspaces of matrices of dual quaternions}
	\label{sec:IsometriesDualQuaternion}
	
	At this point, we have established the background material and notation required to point out two important facts. 
	First, the eigenspaces of a Hermitian matrix of dual quaternion are invariant to the action of unit dual quaternions.
	Indeed, if $x \in \dqal $ and $\mathbf{x} \in \dqal^{n} $ is an eigenvector of $\mathbf{A} \in \dqmat$ corresponding to eigenvalue $\lambda \in \dnal$, then
	\begin{equation*}
		\mathbf{A} \left( \mathbf{x} x \right)
		= \mathbf{x} \lambda x
		= \left(\mathbf{x} x\right) \lambda.
	\end{equation*}
	Therefore, $\mathbf{x} x$ is also an eigenvector corresponding to eigenvalue $\lambda$.
	The last transition follows, because the $x$ and $\lambda$ commute, since the latter is in the center of $\dqal$.
	
	The second fact relates to the isometries of $\dqal^{n}$ with respect to the dual quaternion norm defined in \eqref{eq:DualQuaternionNorm}.
	Given $\mathbf{x} \in \dqal^{n}$ and an invertible $x \in \dqal$, we have
	\begin{equation*}
		\Ltwonorm{\mathbf{x} x }_{2}
		= \Ltwonorm{\mathbf{x}}_{2} \left|x\right|.
	\end{equation*}
	This can be proved by considering both cases of \eqref{eq:DualQuaternionNorm} separately and using \Cref{thm:DQAbsoluteValueProperties}.
	Therefore, $\Ltwonorm{\mathbf{x}} =\Ltwonorm{\mathbf{x} x }_{2} $ if and only $\left|x\right| = 1$, which is equivalent to $x^{*} x = 1$, i.e., $x$ is a unit dual quaternion. Therefore, the unit dual quaternions are the isometries of a free module of dual quaternions with respect to the norm structure induced by \eqref{eq:DualQuaternionNorm}.
	
	Combining these two facts together, we conclude that for a fixed unit dual number $q$, the $\dqal \to \dqal$ map $\mathbf{x} \mapsto \mathbf{x} q$ has two important properties. First, it preserves the norm \eqref{eq:DualQuaternionNorm}. Second, it maps eigenvectors of Hermitian matrices of dual quaternions to eigenvectors corresponding to the same eigenvalue.
	These two properties play a key role in the spectral approach to synchronization we develop in the next section.

	\section{The Spectral Approach to Synchronization Problems via Dual Quaternions}
	\label{sec:SpectralApproachToSynchronization}
	
	In this section, we apply the dual quaternion representation of $\SE(3)$ to the problem of $\SE(3)$ synchronization.
	In \Cref{sec:SynchronizationProblem}, we describe group synchronization problems and noise types which were considered in the literature.
	In \Cref{sec:GroupSynchronizationSpectralMethod}, we describe the spectral method, a prevalent solution to group synchronization problems.
	As we describe in \Cref{sec:DualQuaternionSynchronizationSpectralMethod}, this approach can be generalized to the dual quaternions algebra and therefore can be used to address $\SE(3)$ synchronization problems.
	We dedicate \Cref{sec:NumericalExperiments} to numerical experiments demonstrating that our proposed method indeed works on simulated data.

	\subsection{Group Synchronization: Problem Statement}
	\label{sec:SynchronizationProblem}
	
	Let $G$ be a group.
	Let $g_{1}, \dots, g_{n} $ be elements of $G$ and let $g_{i j} = g_{i} g_{j}^{-1}$ be their ratios, $ 1 \le i , j \le n$.
	A synchronization problem is an estimation problem in which one attempts to estimate $\left\{ g_{i} \setsep 1 \le i \le n \right\}$, the absolute group elements, from their ratios $\left\{ g_{i j} \setsep 1 \le i < j \le n \right\}$.
	Often, the clean ratios are perturbed by some sort of noise.
	Because $g_{1} g, \dots, g_{n} g$ generate the same set of clean measurements for every arbitrary $g \in G$, we can only hope to estimate $g_{1}, \dots, g_{n}$ up to a global right-multiplication by an arbitrary element of $G$.
	
	In order to describe the typical noise models, it is convenient to represent the group faithfully as some subgroup of the general linear group, the group of invertible matrices. 
	Thus, we assume in this section that one has an injective homomorphism $\rho : G \to \mathrm{GL}_{k} (\F[F])$, with $\F[F] = \F$ or $\F[F]  = \F[C]$.
	The absolute group elements and their ratios are mapped onto $\mathrm{GL}_{k} (\F[F])$ via $\rho$ in a consistent manner. Namely, $\rho (g_{i j}) = \rho (g_{i}) \rho (g_{j})^{-1}$.
	The elements of $G$ are represented by elements of $\F[F]^{k \times k}$, the $k \times k$ matrices with entries in $\F[F]$.
	We refer to $\F[F]^{k \times k}$ as the representation space and to the image of $\rho$ as the representing subspace.
	Throughout this subsection, we suppress the homomorphism $\rho$ and assume the absolute group elements and their ratios are already faithfully represented in some matrix algebra.
	
	Given a group represented in this manner, three kinds of noise sources are usually considered in the literature.
	First, \textbf{pertrubative noise} can be applied to each clean ratio.
	This has two forms. 
	\textbf{Multiplicative noise} perturbs the clean ratio, while still remaining in the group, so that we measure $\widehat{g}_{i j} = g_{i j} \varepsilon_{i, j}$, with $\varepsilon_{i, j} \in G$.
	\textbf{Additive noise} perturbs the clean ratio within the representation space, so that we measure $\widehat{g}_{i j} = g_{i j} + \varepsilon_{i, j}$, with $\varepsilon_{i, j} \in \F[F]^{k \times k} $.
	We here assume that pertrubative noise is either multiplicative for all ratios, or additive for all ratios.

	The last two kinds of noise are best viewed as applied to a set of measurements. This set of measurements may be assumed to be either a complete set of clean measurements $\left\{ g_{i j} \setsep 1 \le i < j \le n \right\}$, a measurement set afflicted with pertrubative noise, $\left\{\widetilde{g}_{i j} \setsep 1 \le i < j \le n \right\}$, or a subset of one of these.
	Every set of measurements can be envisaged as an undirected graph of order $n$, in which each vertex is labeled with an absolute group element and the edges correspond to the measurements in the set. See \Cref{fig:MeasurementGraph} for an example.
	
	The last two kinds of noise are best viewed as acting on the measurement graph.
	The first, which we term \textbf{selection noise}, merely removes a subset of the edges of the measurement graph. 
	The second, which is often called \textbf{corruptive} or \textbf{adversarial noise}, replaces the labels of a subset of edges with a randomly generated labels which are independent from the clean group ratios.
	In both kinds of noise, the affected subset of edges may be chosen randomly or deterministically.
	The measurement graph has a subgraph formed by edges which were not corrupted.
	We assume that this subgraph is connected, regardless of the method used to choose the subset of edges affected by selection or corruptive noise.
	\autoref{fig:MeasurementGraph} shows an example of a measurement graph with corruptive noise.
	
	\begin{figure}
		\begin{center}
			\begin{tikzpicture}[ampersand replacement=\&]
				\matrix[column sep = 1.4cm, row sep = 1.4cm, nodes={rounded corners, rectangle, draw, blue, inner sep = 1pt}] {
					\& \node(g0) {$g_{0}$}; \& \& \node(g1) {$g_{1}$}; \&\& \node(g2) {$g_{2}$}; \\
					\node(g3) {$g_{3}$};  \& \& \node(g4) {$g_{4}$}; \& \& \node(g5) {$g_{5}$}; \& \& \node(g6) {$g_{6}$}; \& \\
					\& \node(g7) {$g_{7}$}; \& \& \node(g8) {$g_{8}$}; \&\& \node(g9) {$g_{9}$}; \\
				};
				\begin{scope}[nodes = {inner sep = 1pt, font=\scriptsize}]
					\draw[darkgreen] (g0) -- node[above] {$g_{0 1}$} (g1);
					\draw[darkgreen] (g0) -- node[above left] {$g_{0 3}$} (g3);
					\draw[darkgreen] (g0) -- node[below left] {$g_{0 4}$} (g4);
					\draw (g0) -- node[near start, above] {$g_{0 5}$} (g5);
					
					\draw[darkgreen] (g1) -- node[near start, below right] {$g_{1 4}$} (g4);
					
					\draw[darkgreen] (g2) -- node[above left] {$g_{2 5}$} (g5);
					\draw[darkgreen] (g2) -- node[above right] {$g_{2 6}$} (g6);
					\draw (g2) -- node[right] {$g_{2 9}$} (g9);
					
					\draw[darkgreen] (g3) -- node[below] {$g_{3 4}$} (g4);
					\draw[darkgreen] (g3) -- node[below left] {$g_{3 7}$} (g7);
					
					\draw[darkgreen] (g4) -- node[above] {$g_{4 5}$} (g5);
					\draw[darkgreen] (g4) -- node[near start, below left] {$g_{4 8}$} (g8);
					\draw (g4) -- node[near end, below left] {$g_{4 9}$} (g9);
					
					\draw[darkgreen] (g5) -- node[near end, below right] {$g_{5 7}$} (g7);
					\draw[darkgreen] (g5) -- node[near start, below right] {$g_{5 8}$} (g8);
					\draw[darkgreen] (g5) -- node[above right] {$g_{5 9}$} (g9);
					
					\draw[darkgreen] (g6) -- node[near start, below right] {$g_{6 9}$} (g9);
				\end{scope}
			\end{tikzpicture}
			
			\caption{An example synchronization measurement graph with $10$ absolute group elements, labeled from $0$ to $9$. Vertices appear in blue circles. Edges are labeled with ratio measurements. Edges of the subgraph of non-corrupted measurements are dyed green. Edge labels of this subgraph can be thought of as either clean or afflicted with perturbative noise. In a synchronization problem, the goal is to estimate the absolute group elements in a manner that is as consistent as possible with the measurements of their ratios.}
			
			\label{fig:MeasurementGraph}
		\end{center}
	\end{figure}
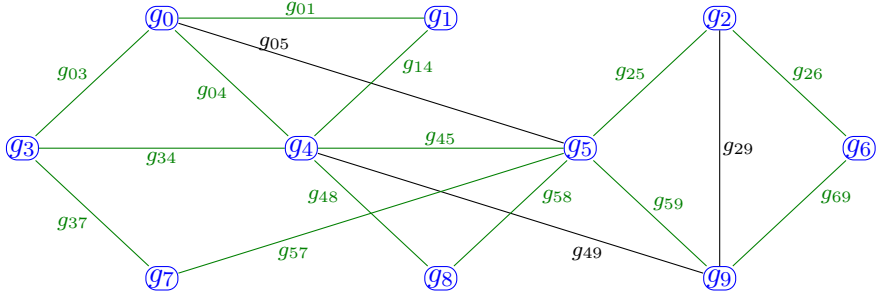
	
	All combinations of the three kinds of noise were considered in the literature, both in numerical investigations of estimation methods and in their theoretical analysis \cite{Arrigoni2016,Boumal2016,Boumal2014,Janco2022,Lerman2021,Romanov2020,Singer2011,Wang2013}. The literature cited provides a mere illustrative selection.

	\subsection{Group Synchronization via the Spectral Method}
	\label{sec:GroupSynchronizationSpectralMethod}
	
	Let $g_{1}, \dots, g_{n} $ be elements of a group $G$.
	We emphasize at the outset of this subsection that all elements of $G$ are faithfully represented within some matrix algebra $\F[F]^{k \times k }$. 
	Whenever perturbative noise is considered, it is always assumed to be multiplicative. 
	
	The spectral method relates synchronization problems to eigenproblems on a matrix, which is essentially a generalized adjacency matrix of the synchronization graph. 
	Let $\mathbf{Y} \in \F[F]^{n k \times n k}$ be a block matrix such that the $(i, j)$th block $\mathbf{Y}_{i j}$, $i < j$, is the measurement of the ratio $g_{i} g_{j}^{-1}$. 
	If the measurement is clean, perturbed (multiplicatively) or corrupted, we assign $\mathbf{Y}_{j i} = \mathbf{Y}_{i j}^{-1}$.
	If the measurement is missing, we set $\mathbf{Y}_{i j} = \mathbf{Y}_{j i} = \mathbf{0}_{ k \times k}$.
	The diagonal blocks are $\mathbf{Y}_{i i} = \mathbf{I}_{k}$ for $i=1, \dots, n$.
	Essentially, the block on the $i$th row of blocks and $j$th column of blocks is a measurement of the ratio of $g_{i} g_{j}^{-1}$.
	Since these measurements label the edges of the synchronization graph, this is a generalization of the notion of the adjacency matrix of a graph.
	
	In the absence of noise, it is evident that $\mathbf{Y} = \mathbf{g} \mathbf{g}^{-1}$.
	Here, $\mathbf{g} \in \F[F]^{n k \times k}$ is the column block matrix with $g_{i}$ as its $i$th block and $\mathbf{g}^{-1} \in \F[F]^{k \times n k}$ is the row block matrix with $g_{i}^{-1}$ as its $i$th block.
	It is immediate that $\mathbf{Y} \mathbf{g} = n \mathbf{g}$ and that $n$ is the only non-zero eigenvalue of $\mathbf{Y}$.
	Thus, the columns of $\mathbf{g}$ form a basis of the eigenspace of $\mathbf{Y}$ associated with $n$.
	Furthermore, the kernel of $\mathbf{Y}$ is the orthogonal complement of the space spanned by the columns of $\left(\mathbf{g}^{-1}\right)^{\top} \in \F[F]^{n k \times k}$. 
	Thus, $\F[F]^{n}$ is the direct sum of the kernel of $\mathbf{Y} $ and the eigenspace of $\mathbf{Y}$ associated with $n$.
	Overall, in the absence of noise, the eigenspaces structure of the measurement matrix $\mathbf{Y}$ contains information on $\mathbf{g}$, and therefore contains information on the absolute group elements.
	
	In the presence of noise, the measurement matrix $\mathbf{Y}$ can be viewed as a disrupted form of an hypothetical, clean measurement matrix of the form we described in the previous paragraph. 
	The eigenspaces of the measurement matrix can also be viewed as a disruptions of the eigenspaces of this hypothetical matrix.
	If the disruption is mild enough, the eigenspaces of the top $k$ eigenvalues of $\mathbf{Y}$ is expected to preserve enough of the information of the eigenspace associated with the eigenvalue $n$ of this hypothetical matrix.
	By projecting these top eigenvectors of the measurement matrix onto the representation of $G$, it is possible to obtain an estimate of the absolute group elements. This step may also somewhat alleviate the disruption of the eigenspace caused by the noise.
	The resulting estimation method can be formulated as a two step procedure:
	\begin{enumerate}
		\item Find a set of unit eigenvectors of $\mathbf{Y}$ associated with its largest $k$ eigenvectors. These can be placed in a matrix $\mathbf{U} \in \F^{n k \times k}$.
		
		\item Project these eigenvectors onto $G$.
		
	\end{enumerate}
	
	Implementing this two step method is fairly straightforward using existing numerical algebra libraries.
	The first step is can be easily implemented using all modern numerical eigenproblem solvers. 
	The second step, often referred to as ``\textbf{rounding}'' in the synchronization literature \cite{Romanov2020,Singer2011}, can be more involved, depending largely on $G$ and its chosen representation.
	A few examples will serve to illustrate the range in the complexity of this task.
	
	\begin{example}
		\label{exm:SO2Synchronization}
		If $G = \SO(2)$ is represented as unit complex numbers, the result of the first step is a unit vector of complex numbers.
		Simply dividing each of its entries by its complex norm yields a vector of unit complex numbers.
		This is possible, provided the disruption caused by the noise is mild enough so as to ensure all entries of the eigenvector are non-zero. 
		This rounding procedure was used to great effect in $\SO(2)$ synchronization, e.g. \cite{Perry2018,Singer2011}.
	\end{example}
	
	\begin{example}
		\label{exm:SOSync}
		If $G = \SO(k) $ is represented as real orthogonal $k \times k$ matrices, the result of the first step is a $n k \times k$ matrix which can be treated as a column block matrix with $k \times k$ blocks.
		The projection is carried out block-wise.
		The SVD of each block is calculated, yielding a decomposition $\mathbf{B} = \mathbf{U} \Sigma \mathbf{V}^{\top}$, where $\mathbf{U}$ and $\mathbf{V}$ are orthogonal matrices and $\Sigma$ is a diagonal matrix.
		The rounding step is completed by replacing every block with the image of the map $\mathbf{B} \mapsto \mathbf{U} \mathrm{diag} \left(1, \dots, 1, \det \left(\mathbf{U} \mathbf{V}^{\top} \right)\right) \mathbf{V}^{\top}$.
	\end{example}
	
	\begin{example}
		\label{exm:SE3Synchronization}
		In \cite{Arrigoni2016}, $G = \SE(3)$ was represented on the real $4 \times 4$ matrices of the form 
		\begin{equation}
			\label{eq:SE3MatrixRepresentation}
			\left[\begin{matrix}
				\mathbf{R} 	& 	\mathbf{t} \\
				\mathbf{0}^{\top} 	& 1
			\end{matrix}\right].
		\end{equation}
		This matrix represents $\left(R, \mathbf{t}\right) \in \SE(3)$, where $R$ is a rotation represented as $\mathbf{R}$, a real $3 \times 3$ orthogonal matrix with determinant of  $1$.
		Consider the matrix $\mathbf{V} \in \F^{4 n \times 4}$, the columns of which are the top eigenvectors of the measurement matrix which were calculated in the first step.
		The rounding procedure considered by \cite{Arrigoni2016} begins by extracting every fourth row of $\mathbf{V}$. Denoting the resulting $\F^{n \times 4}$ matrix by $\mathbf{V}_{4}$, all solutions of $\mathbf{V}_{4} \mathbf{u} = \mathbf{0}$ are found by finding an orthonormal basis $\mathbf{u}_{1}, \mathbf{u}_{2}, \mathbf{u}_{3}$ for the kernel of $\mathbf{V}_{4}$. A unit vector solution for $\mathbf{V}_{4} \mathbf{u}_{4} = \left(1, \dots, 1 \right)^{\top}$ is also found.
		Since nothing guarantees that solutions to these equations even exist, they are approximated in the least squares sense.
		Let $\mathbf{U} \in \F^{4 \times 4}$ be the matrix with columns $\mathbf{u}_{1}, \mathbf{u}_{2}, \mathbf{u}_{3}, \mathbf{u}_{4}$.
		Let $\mathbf{V}' = \mathbf{V} \mathbf{U}$ and substitute every forth row of the resulting matrix with $\left(0, 0, 0, 1\right)$.
		Now, each of the $4 \times 4$ blocks of $\mathbf{V}'$ has almost the form \eqref{eq:SE3MatrixRepresentation}.
		If this matrix is complex, zero out the imaginary part of every entry. 
		Finally, the top $3 \times 3$ submatrix of every block is projected onto $\SO(3)$ by utilizing its SVD in the manner described in \Cref{exm:SOSync}.
	\end{example}
	
	Non-linear transformations are employed in all three examples.
	However, the simplicity of the rounding step in \Cref{exm:SO2Synchronization} for $\SO(2)$ is contrasted by the multi-step, complicated procedure shown in the \Cref{exm:SE3Synchronization} for $\SE(3)$, the focus of this paper.
	This difference stems from inherent features of the representation of the group.
	The group $\SO(2)$ is embedded within the complex plane in a way which respects the algebraic properties of the representation space. 
	First, the norm of a complex number is directly related to the conjugation operation, i.e., $\left|z\right| = \sqrt{z z^{*}}$ for any $z \in \F[C]$. The submanifold of the complex plane into which $\SO(2)$ is embedded, the unit circle, is entirely defined in terms of this conjugation operation, i.e., $\F[T] = \left\{ z \in \F[C] \setsep z z^{*} = 1 \right\} $.
	Second, the embedding takes into consideration the $*$-algebra structure of the complex plane. The group inverse is the conjugation operation.
	
	These features do not hold for the matrix representation \eqref{eq:SE3MatrixRepresentation} of $\SE(3)$.
	As we discuss in greater detail in \Cref{sec:RepresentingSE3Comparison}, this representation merely embeds $\SE(3)$ within the multiplicative group of the matrix algebra of real $4 \times 4$ matrices.
	It does not rely or respect the other algebraic structures of the representation space. Merely transposing \eqref{eq:SE3MatrixRepresentation} almost always yields a matrix which does not represent an element of $\SE(3)$.
	
	We now discuss the nature of the rounding procedure.
	On the surface, all it does is ensure that the output of the spectral method are elements of the group, but a closer look reveals a more complicated situation.
	In the noiseless case, which is simplest, we saw that the measurement matrix has a single non-zero eigenvalue. 
	We consider now the linear isometries of its eigenspace with respect to the standard Euclidean norm.
	These isometries form a compact group.
	Numerical linear algebra tools ensure we shall obtain unit eigenvectors of the measurement matrix. These may be $\frac{1}{n} \mathbf{g}$, where the $\frac{1}{n}$ term ensures these columns are unit eigenvectors. However, more often they are expected to be $\frac{1}{n} \mathbf{g} \mathbf{S}$, where $\mathbf{S}$ is an isometry of the eigenspace of the non-zero eigenvalue of the measurement matrix.
	
	Let us consider the three examples above. In $\SO(2)$ synchronization (\Cref{exm:SO2Synchronization}), the group of isometries is $\SO(2)$ itself, represented by unit complex numbers.
	Indeed, since the eigenspace is one-dimensional, every isometry has the form $\mathbf{x} \mapsto \mathbf{x} x$ for $x \in \F[C]$. By substituting $\mathbf{x} x$ into the standard Euclidean norm, it is easy to show that it is an isometry if and only if $\Ltwonorm{\mathbf{x} x} = \Ltwonorm{x} \left|x\right|$, which holds if and only if $\left|x\right| = 1$. Therefore, $x \in \SO(2)$.
	It follows that in the noiseless case, the spectral method recovers the absolute group elements up to a right-multiplication by an element of $\SO(2)$. And so, in the presence of noise, the rounding step alleviates a disruption in the eigenspace structure and nothing more.
	
	This is not the case in the other two examples. There, the rounding has to perform other tasks, which are unrelated to the disruption  of the eigenspaces caused by the noise and will be required even in its absence.
	Again, it is instructive to consider the noiseless case first.
	In $\SO(k)$ synchronization (\Cref{exm:SOSync}), the non-zero eigenspace is a $k$-dimensional subspace of $\F^{n}$ and its isometry group with respect to the standard Euclidean norm on $\F^{n}$ is $\OG(k)$, the group of orthogonal transformations of $\F^{k}$. The group $\SO(k)$ is a proper subgroup of $\OG(k)$ and so the unit eigenvectors in the eigenspace can be separated into two disjoint populations. The first population is obtained by applying $\SO(k)$ to the columns of $\frac{1}{n} \mathbf{g}$ from the right. The second can be obtained by applying elements of $\OG(k) \setminus \SO(k)$.
	This means that being a solution to the synchronization problem implies being a basis of an eigenspace of the non-zero eigenvalue of the measurement matrix, but the converse does not hold.
	We refer to this phenomena as the \textbf{eigenspace-synchronization gap}.
	Therefore, in the presence of noise, the rounding procedure performs at least the following two tasks. It alleviates the disruption to the eigenspace structure \textit{and} ensures that the resulting estimate of the absolute group elements is indeed the absolute group elements up to a multiplication by an arbitrary element of $G$ from the right. The latter can no longer be taken for granted.
	
	These issues are exacerbated considerably in $\SE(3)$ synchronization (\Cref{exm:SE3Synchronization}), where the group is non-compact. There, even in the absence of noise, the group $\SE(3)$ is not even a subgroup of the isometries of the eigenspace. 
	This is the case because the group of isometries is compact, while $\SE(3)$ is not.
	In this case, the eigenspace-synchronization gap is not so much a gap as an abyss.
	The domain of the rounding procedure is a compact manifold, matrices formed with four unit eigenvectors as columns. Its codomain is a non-compact manifold, block matrices with blocks of the form \eqref{eq:SE3MatrixRepresentation}.
	The rounding procedure maps this compact domain to this non-compact codomain.
	Thus, even in the absence of noise, for $\SE(3)$ synchronization it is another large step away from the neat correspondence exhibited in  $\SO(2)$ synchronization (\Cref{exm:SO2Synchronization}) between eigenvectors of the measurement matrix and estimates of the absolute group elements. In the presence of noise, it does a great deal more than handling the eigenspace disruption. 
	
	In closing this subsection, we digress to note an important distinction between the way we introduced the spectral method and the way it is often discussed in the literature. In the literature on synchronization, the relationship between the eigenvectors of the noisy measurement matrix and an estimate of the absolute group elements is often arrived at using a optimization terminology. 
	For instance, the initial goal of both \cite{Arrigoni2016} and \cite{Singer2011} is an estimate the absolute group elements obtained by solving a certain least squares problem over $\widehat{\mathbf{g}} \in G^{n}$. Here, $G^{n}$ is the set of all column block matrices with blocks representing elements of $G$.
	Since the domain of the problem is non-convex, this is a challenging optimization problem to solve in practice. This constraint is therefore changed to all matrices satisfying $\widehat{\mathbf{g}}^{*} \widehat{\mathbf{g}} = n \mathbf{I}_{k}$.
	In the case of \cite{Singer2011}, which worked on $\SO(2)$ represented on the unit complex numbers, this change in the constraint is a relaxation of the constraint of the original least squares problem. That is, the feasible set of the original problem is replaced by a larger set.
	However, this change is not a relaxation of a constraint in \cite{Arrigoni2016}, which worked on $\SE(3)$ with the matrix representation \eqref{eq:SE3MatrixRepresentation}.
	Regardless of that, in both cases, the new least squares problem turns out to be easily solved by finding the eigenvectors of the measurement matrix.
	
	We note this here merely to point out that the optimization perspective provides a useful heuristic for deriving practically useful estimation methods  for synchronization problems. 
	However, it does not offer an immediate way to prove when and why these methods work.
	We here treat the noisy measurement matrix as a disrupted version of a clean, complete measurement matrix, and seek to approximate the eigenspaces of the latter using the eigenspaces of the former.
	In the optimization perspective, the very same eigenproblem is considered as a relaxed optimization problem, which one hopes approximates the solution of the non-convex problem.
	This is a difference without much consequence to the governing mathematics of the spectral method. Regardless of the heuristic used to derive the method, considerable effort is required to come up with and prove theoretical guarantees on the performance of the spectral method.
	Indeed, there is a rich literature of formal characterizations of when and how solutions of the eigenproblem provide good estimates of the absolute orientations, e.g., \cite{Ling2022,Ling2022a,Romanov2020,Singer2011} with many mathematical tools, including random matrix theory and perturbation theory, being brought to bear on these questions.

	\subsection{A Spectral Approach to $\SE(3)$ Synchronization via Dual Quaternion Representation}
	\label{sec:DualQuaternionSynchronizationSpectralMethod}
	
	In \Cref{sec:GroupSynchronizationSpectralMethod}, we discussed the contrast between the rounding step in $\SO(2)$ and $\SE(3)$ synchronization in \Cref{exm:SO2Synchronization} and \Cref{exm:SE3Synchronization}.
	We identified two types of differences between them. First, their respective representations have different features. For instance, the representation of $\SO(2)$ agreed with involutive algebraic structure of the space it was embedded in, whereas the representation of $\SE(3)$ did not.
	Second, in a sense, the rounding does more in the $\SE(3)$ case than in the $\SO(2)$ case.
	We now utilize the material we surveyed in \Cref{sec:DualQuaternionAlgebraAndFreeModules} to develop a spectral synchronization method for $\SE(3)$, which is more similar to the spectral method for $\SO(2)$ in both these respects.
	We note in passing that the quaternions and the dual quaternions were used in the past to address synchronization problems \cite{Govindu2001,Torsello2011}, but this is the first time an analog of the spectral method for synchronization was developed using dual quaternions.
	
	All the necessary components of the spectral method are present over the dual quaternions. 
	Elements of $\SE(3)$ are represented as unit dual quaternions, as we surveyed in \Cref{sec:SE3RepresentationOnDualQuaternions}.
	This representation agrees with both the multiplication operation of the algebra and its involution, since the dual quaternion conjugate of unit dual quaternion is its inverse.
	The generalized adjacency matrix of the synchronization graph is a matrix of dual quaternions, rather than a block matrix with real or complex entries. 
	The resulting matrix is Hermitian, in the dual quaternion sense.
	By the spectral theorem for dual quaternion matrices, it has a eigenvector decomposition, as described in \Cref{sec:DQSpectralTheorem}.
	
	Now, given a clean, complete set of measurements, the measurement matrix has the form $\mathbf{Y} = \mathbf{g} \mathbf{g}^{*}$, where $\mathbf{g} = \left(g_{1}, \dots, g_{n}\right)^{\top}$ is a column vector of unit dual quaternions and $\mathbf{g}^{*}$ is its conjugate transpose.
	In the presence of noise, we consider the dual quaternion eigenvector of the measurement matrix corresponding to its top eigenvalue as an approximation of $\mathbf{g}$, the eigenvector of the clean measurement matrix.
	Here, ``top'' is with respect to the order of eigenvalues established in \Cref{thm:SpectralTheorem}.
	This eigenvector can be approximated using the power method we surveyed in \Cref{sec:PowerIteration}.
	The rounding is carried out by applying the map defined in \Cref{thm:ProjectionSE3} to each entry this eigenvector. This rounding procedure stems directly from the close relationship between the algebraic structure and geometry of the dual quaternion algebra and the unit dual quaternions, as we discussed at the end of \Cref{sec:SE3RepresentationOnDualQuaternions}.
	If the noise is mild enough, all entries of the top eigenvector are expected to be invertible, in which case the projection amounts to applying the map $q \mapsto \frac{q}{\left|q\right|}$ entry-wise to the eigenvector.
	
	Overall, we obtain the following two-step algorithm:
	\begin{enumerate}
		\item Find the eigenvector of $\mathbf{Y}$ corresponding to its largest eigenvalue, in the order established in \Cref{thm:PowerIterationConvergence}.
		Use the power iteration we surveyed in \Cref{sec:PowerIteration}.
		Denote this eigenvector by $\widehat{g} = \left(\widehat{g}_{1}, \dots, \widehat{g}_{n}\right)^{\top}$.
		
		\item Carry out the substitution $\widehat{g}_{j} \to \frac{\widehat{g}_{j}}{\left|g_{j} \right|} $ on each of its entries ($j=1,\dots, n$).
		
	\end{enumerate}
	As we demonstrate in \Cref{sec:NumericalExperiments}, this algorithm yields comparable performance to the state-of-the-art spectral approach to $\SE(3)$ synchronization described in \cite{Arrigoni2016}.
	We emphasize the simplicity of the rounding step compared to the one employed by \cite{Arrigoni2016}, which we described in \Cref{exm:SE3Synchronization}. Its simplicity is owed entirely to the close relationship between $\SE(3)$ and the algebra of dual quaternions.
	We also emphasize that here the rounding procedure does nothing more than handle the disruption the noise causes to the eigenspaces of the clean measurement matrix. Indeed, as we discussed in \Cref{sec:IsometriesDualQuaternion}, the isometry group of the eigenspaces of $\mathbf{Y}$ is actually the unit dual quaternions and as we discussed in \Cref{sec:SE3RepresentationOnDualQuaternions}, each unit dual quaternion represents an element of $\SE(3)$. Thus, there is no eigenspace-synchronization gap, despite the non-compact setting. Our method enjoys similar performance to the current state-of-the-art in spectral synchronization for $\SE(3)$, yet reaps the benefits of embedding $\SE(3)$ in an algebra that is much closer to it in structure.
	
	Taken together, the resulting spectral method is remarkably similar to the spectral method of $\SO(2)$ we showcased in \Cref{exm:SO2Synchronization}. 
	This is evident in two places. First, in the properties of the representing space and its relationship to the subset which represents the group. Second, in the underlying tasks performed by the rounding step.
	This similarity is especially remarkable when we consider the fact that $\SO(2)$ is a compact, commutative group, while $\SE(3)$ is a non-compact and non-commutative group.
	
	Finally, we note that \textcite{Cui2023} addressed a synchronization problem with selection noise, though they do not use the word synchronization at all. 
	They proposed an iterative method, which is distinct from the spectral method we constructed here, both in its form and its theoretical underpinnings. 
	First, it is not the spectral method as we conceived it here. At every iteration, the iteration \eqref{eq:PowerIteration} is used to approximate the eigenvector of some matrix of dual quaternions.
	Second, their iterative method was motivated by the need to solve an explicit optimization problem over the dual quaternions, whereas we relied on the heuristic picture of the eigenspaces of a matrix being disrupted by noise.

	\section{Numerical Experiments}
	\label{sec:NumericalExperiments}
	
	We run several numerical experiments demonstrating the utility of the dual quaternions in synchronization problems.
	We begin in \Cref{sec:SyntheticEXperimentsSetup} by describing how the synthetic synchronization measurement matrices were simulated in our experiments and describe our experimental procedure.
	In \Cref{sec:ResponseToMissingEntries} and \Cref{sec:ResponseToCorruptive} we show and discuss the results of the synthetic data experiments. They demonstrate that using the dual quaternions to represent $\SE(3)$ yields comparable estimates to the current state-of-the-art spectral synchronization method \cite{Arrigoni2016}, while enjoying the simpler rounding procedure obtained by embedding $\SE(3)$ in the algebra of dual quaternions.

	\subsection{Synthetic Experiments Setup}
	\label{sec:SyntheticEXperimentsSetup}
	
	We conducted several experiments on synthetic measurements matrices, modeled after the synthetic data experiments carried out in \cite{Arrigoni2016}, the current state-of-the-art in $\SE(3)$ synchronization.
	Probability functions on $\SE(3)$ were defined using the angle-axis-translation representation discussed in \Cref{sec:RepresentingSE3Comparison}.
	Ground truth elements $g_{1}, \dots, g_{n} \in \SE(3)$ were drawn from i.i.d.\ random variables. 
	The angles were independently and uniformly distributed over $[0, 2 \pi )$.
	The axis was independently and uniformly distributed over $S^{2}$.
	Translations of the form $\mathbf{t} = \left(t_{1}, t_{2}, t_{3} \right)^{\top} \in \F^{3}$ were independently distributed with $t_{1}$, $t_{2}$ and $t_{3}$ being i.i.d. standard Gaussian random variables.
	The measurement matrix $\mathbf{Y} \in \dqmat $ follows the following model:
	\begin{equation}
		\label{eq:MeasurementModel}
		\mathbf{Y}_{i j} 
		= \begin{cases}
			e_{i j} \left( s_{i j} g_{i} g_{j}^{-1}  \varepsilon_{i, j}  + \left(1 - s_{i j}\right) c_{i j}\right)  & 1 \le i < j \le n, \\
			1 	& i = j ,\\
			\mathbf{Y}_{j i}^{*} 	& 1 \le j < i \le n.
		\end{cases}
	\end{equation}
	Here, $\left(e_{i j} \setsep 1 \le i < j \le n \right)$ are i.i.d Bernoulli random variables with parameter $p$ and 
	$\left(s_{i j} \setsep 1 \le i < j \le n \right)$ are i.i.d. are the same with parameter $q$.
	The former are indicators of existing measurements and the latter are indicators of entries which were not corrupted.
	Denoting by $\circ$ the entry-wise product of two matrices of elements of $\SE(3)$, the matrices $\mathbf{E} = \left(e_{i j}\right)$, $\mathbf{S} = \left(s_{ i j}\right)$ and $\mathbf{E} \circ \mathbf{S} = \left(e_{i j}s_{i j}\right)$ are all the adjacency matrices of random graphs following the Erd\H{o}s-R\'{e}nyi model with parameter $p$, $q$ and $p q$, respectively.
	The corrupted entries $\left(c_{i j} \setsep 1 \le i < j \le n\right)$ are i.i.d. with an angle in degrees normally distributed with zero mean and variance $\sigma_{r}^{2}$, axis distributed uniformly over $S^{2}$ and translation with $t_{1}, t_{2}, t_{3}$ being i.i.d. Gaussian with zero mean and variance $\sigma_{t}^{2}$.
	All random variables mentioned above were also independent of each other.
	
	Given a ground truth $\mathbf{g} \in \SE(3)^{n}$ and an estimate $\widehat{\mathbf{g}} \in \SE(3)^{n}$, we measure the quality of the estimate as follows.
	Due to the symmetry in the observations, we have to align $\mathbf{g}$ and $\widehat{\mathbf{g}}$ in the following sense. We find a $g \in \SE(3)$ such that a distance metric between $\mathbf{g}$ and $\widehat{\mathbf{g}} g$ is minimized, where $\widehat{\mathbf{g}} g$ is the result of multiplying by $g$ from the right every entry of $\widehat{\mathbf{g}}$.
	In \Cref{apx:Alignment}, we discuss the specifics, including the distance metric we used.
	Here, we describe the procedure itself.
	First, we assume that every element of $\SE(3)$ is represented by $(q, \mathbf{t})$ with $q \in \qal$ being a unit quaternion with non-negative first coordinate as discussed in \Cref{sec:SO3RepresentationOnQuaternions} and $\mathbf{t} \in \F^{3}$.
	Second, if $g = (q, \mathbf{t})$, 
	$\mathbf{g} = \left(\left(q_{1}, \mathbf{t}_{1}\right), \dots, \left(q_{n}, \mathbf{t}_{n}\right)\right)$ and 
	$\widehat{\mathbf{g}} = \left(\left(\widehat{q}_{1}, \widehat{\mathbf{t}}_{1}\right), \dots, \left(\widehat{q}_{n}, \widehat{\mathbf{t}}_{n}\right)\right)$, we have
	\begin{equation}
		\label{eq:BestAlignerRotation}
		q 
		= \frac{s}{\left|s\right|}, \qquad
		s
		= \sum_{j=1}^{n} \widehat{q}_{j}^{*} q_{j},
	\end{equation}
	and 
	\begin{equation}
		\label{eq:BestAlignerTranslation}
		\mathbf{t}
		= \frac{1}{n} \sum_{j=1}^{n} \varphi \left(\widehat{q}_{j}^{*}\right) \left( \mathbf{t}_{j} - \widehat{\mathbf{t}}_{j}\right).
	\end{equation}
	This alignment procedure was used on the estimate obtained by both methods employed in our experiments, our dual quaternion spectral synchronization and the spectral method of \cite{Arrigoni2016}, which used the matrix representation \eqref{eq:SE3MatrixRepresentation}.
	We note that our alignment method is considerably simpler to apply than the alignment method utilized by \cite{Arrigoni2016}, whose numerical experiments inspired our own.
	
	All experiments were repeated fifty times, each with freshly generated ground truth and noise realization with the specified experimental parameters.
	From each estimate we extracted the entry-wise error, after alignment, for the rotations and translations, separately.
	For the rotations, the error was calculated as
	$d\left(q_{1}, q_{2}\right)
	= 2 \arccos \left( 2 \Eucprod{q_{1}}{q_{2}}^{2} - 1\right)$, 
	where $\Eucprod{q_{1}}{q_{2}}$ is the Euclidean inner product applied to $q_{1}$ and $q_{2}$ considered as elements of $\F^{4}$ with the obvious embedding.
	See \Cref{rem:RotationDistanceMetric} at the end of \Cref{apx:Alignment} for an explanation of this metric.
	For the translations, we used the Euclidean distance, 
	$d \left(\mathbf{t}_{1}, \mathbf{t}_{2} \right)
	= \Ltwonorm{\mathbf{t}_{1} - \mathbf{t}_{2}}$.
	For each, the mean, minimum and maximum error were calculated and each of these measures was averaged over the fifty repeats. The plotted data for every quantity is always its mean over the repeats.
	
	We described the approach of \cite{Arrigoni2016} in \Cref{exm:SE3Synchronization}.
	Both estimation methods were implemented in Python.
	\textcite{Arrigoni2016} originally implemented their approach in MATLAB.
	We took care to use equivalent functions to theirs in the Python ecosystem.
	Code used to simulate the figures below is available at \hyperref{https://github.com/idohadi/dqsync-python/}{}{}{https://github.com/idohadi/dqsync-python/}.

	\subsection{Response to Missing Entries and Perturbative Noise}
	\label{sec:ResponseToMissingEntries}
	
	In \Cref{fig:SelectionPerturbativeSyntheticRotationNoise}, we show the results with increasing levels of multiplicative noise on the rotations and no noise on the translations.
	In \Cref{fig:SelectionPerturbativeSyntheticTranslationNoise}, we show the results with increasing levels of multiplicative noise on the translations and no noise on the rotations.
	Consistent with the semidirect product structure of $\SE(3)$, in \Cref{fig:SelectionPerturbativeSyntheticRotationNoise} the estimate of the translational component is also affected by noise, whereas in \Cref{fig:SelectionPerturbativeSyntheticTranslationNoise} the quality of the estimate of the rotations is basically flat, unaffected by the noise.
	When we vary the level of noise on both the rotational and translational part in \Cref{fig:SelectionPerturbativeSyntheticRotationTranaslationNoise}, the error rises on both components.
	Overall, using the dual quaternion representation of $\SE(3)$ yields marginally better mean entry-wise error, especially in the estimate of the rotation and when more entries are missing ($p = 0.05$ vs. $p = 0.3$).
	It is also evident that using the dual quaternions yields somewhat stabler estimates, as indicated by the smoother dual quaternion curves evident in \Cref{fig:MissingEntriesAndPerturbativeNoise}.
	This may be a feature of our implementation, rather than the method, as these peaks are not evident in the plots shown in \cite{Arrigoni2016}.
	All trends we discussed above are also evident in minimum and maximum entry-wise errors.
	We conclude that the two methods perform comparably and that the dual quaternions can be used effectively to represent $\SE(3)$ in applied estimation problems.
	
	\begin{figure}
		\begin{subfigure}{\textwidth}
			\centering
			\includegraphics[scale=0.6]{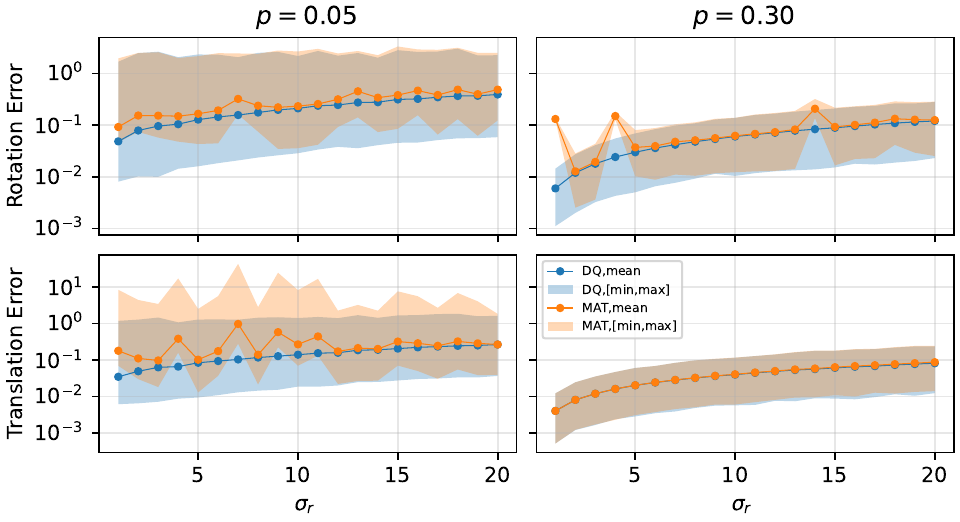}
			\caption{$\sigma_{r} = 1^{\circ}, 2^{\circ}, \dots, 20^{\circ}$ and $\sigma_{t} = 0$.}
			\label{fig:SelectionPerturbativeSyntheticRotationNoise}
		\end{subfigure}
		
		\begin{subfigure}{\textwidth}
			\centering
			\includegraphics[scale=0.6]{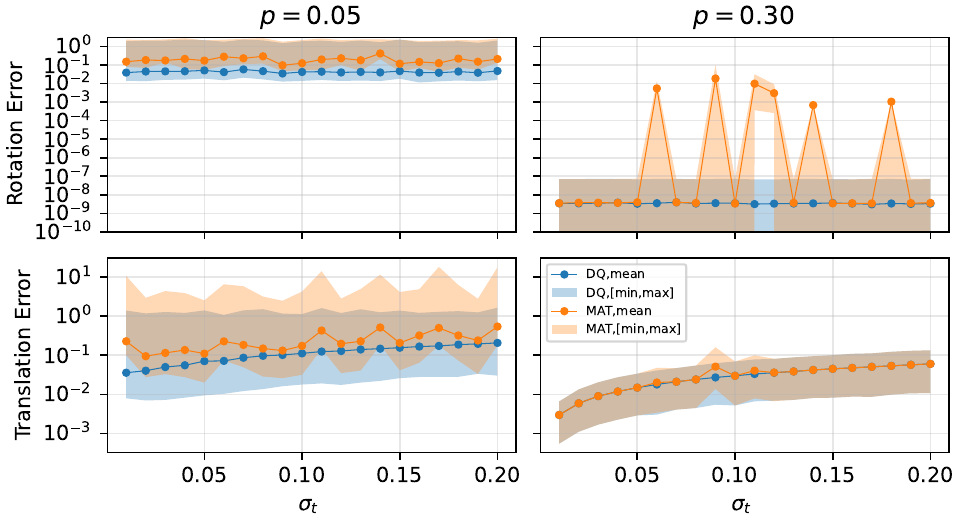}
			\caption{$\sigma_{r} = 0^{\circ}$ and $\sigma_{t} = 0.01, 0.02, \dots, 0.2$.}
			\label{fig:SelectionPerturbativeSyntheticTranslationNoise}
		\end{subfigure}
		
		\begin{subfigure}{\textwidth}
			\centering
			\includegraphics[scale=0.6]{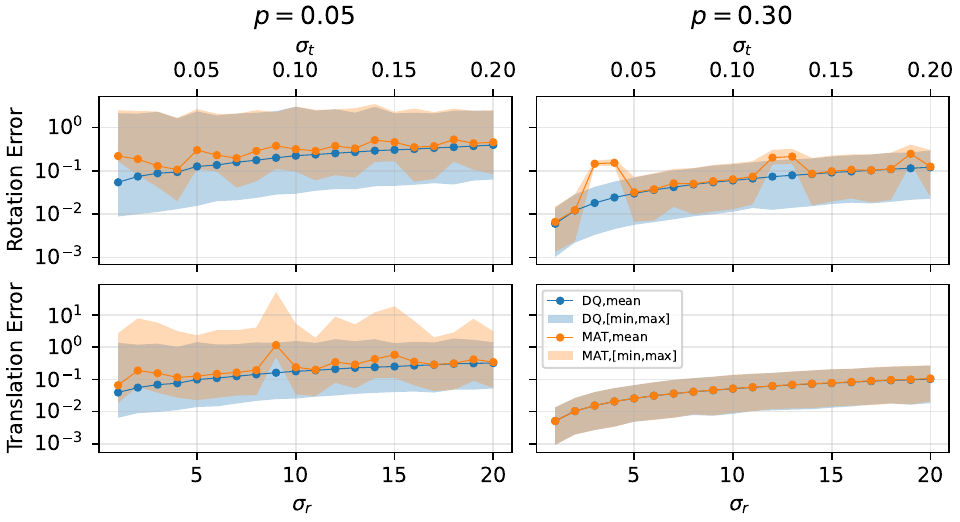}
			\caption{$\left(\sigma_{r}, \sigma_{t}\right) = \left(1^{\circ}, 0.01\right), \left(2^{\circ}, 0.02\right), \dots, \left(20^{\circ}, 0.2 \right)$.}
			\label{fig:SelectionPerturbativeSyntheticRotationTranaslationNoise}
		\end{subfigure}
		\caption{\textbf{Synthetic data experiments with missing entries and perturbative noise.} Throughout experiments, problem size is $n=100$ and there are no corrupted entries ($q = 1$). Every data point is an average over $50$ repeats. The results using the dual quaternion representation are denoted \textbf{DQ} and using the matrix representation \eqref{eq:SE3MatrixRepresentation} are denoted \textbf{MAT}.}
		\label{fig:MissingEntriesAndPerturbativeNoise}
	\end{figure}

	\subsection{Response to Corruptive Noise}
	\label{sec:ResponseToCorruptive}
	
	In \Cref{fig:CorruptivePerturbative}, we used no perturbative noise but there were missing entries.
	In \Cref{fig:CorruptivePerturbativePerturbative}, there are no missing entries but there is perturbative noise.
	The same trends are evident in both of them. 
	First, the rotation estimate is better using the matrix representation \eqref{eq:SE3MatrixRepresentation}, while the translation estimate is better using the dual quaternion representation.
	Second, the differences in estimate quality are larger in \Cref{fig:CorruptivePerturbativePerturbative} than in \Cref{fig:CorruptivePerturbative}.
	Overall, these experiments indicate no clear advantage to either method, but do underscore the fact the dual quaternions can be used effectively to represent $\SE(3)$ in applied estimation problems.

	\begin{figure}[!t]
		\begin{subfigure}{\textwidth}
			\centering
			\includegraphics[scale=0.8]{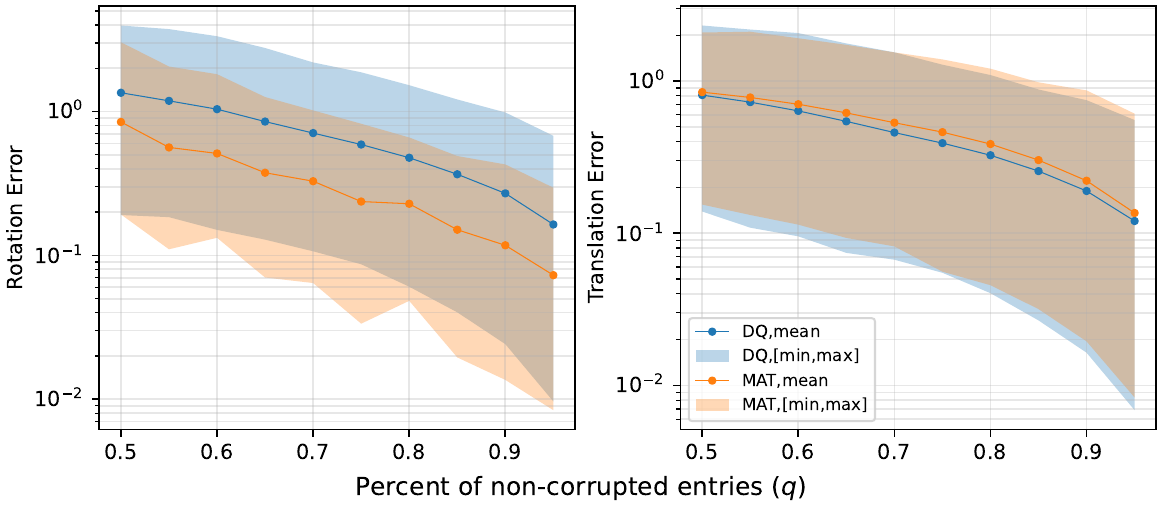}
			\caption{$q = 0.5, 0.55, \dots, 0.95$ with $\sigma_{r} = 0^{\circ}$ and $\sigma_{t} = 0$ and $p = 0.2$.}
			\label{fig:CorruptivePerturbative}
		\end{subfigure}
		
		\vspace{15pt}
		
		\begin{subfigure}{\textwidth}
			\centering
			\includegraphics[scale=0.8]{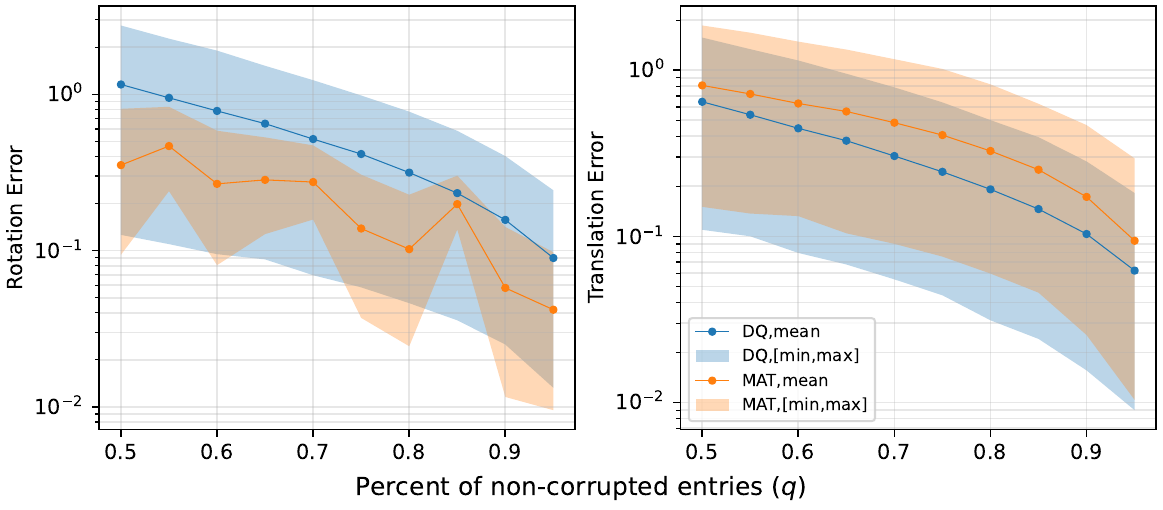}
			\caption{$q = 0.5, 0.55, \dots, 0.95$ with $\sigma_{r} = 5^{\circ}$ and $\sigma_{t} = 0.05$ and $p = 1$.}
			\label{fig:CorruptivePerturbativePerturbative}
		\end{subfigure}
		
		\caption{\textbf{Synthetic data experiments with corruptive noise.} Throughout experiments, problem size is $n=100$. Every data point is an average over $50$ repeats. The results using the dual quaternion representation are denoted \textbf{DQ} and using the matrix representation \eqref{eq:SE3MatrixRepresentation} are denoted \textbf{MAT}.}
		\label{fig:CorruptiveNoise}
	\end{figure}

	\appendix
	
	\section{Comparison of Representations of $\SE(3)$}
	\label{sec:RepresentingSE3Comparison}
	
	We compare the properties of three representations of $\SE(3)$ which were used in applied mathematics work.
	Our discussion is skewed towards properties which we think are central to how notions from linear algebra can be used to solve applied problems over $\SE(3)$.
	
	The most straightforward way to represent elements of $\SE(3)$ is as affine transformations, a pair $\left(R, \mathbf{t}\right) \in \SO(3) \ltimes \F^{3} $.
	These may come in several variants, differing in the way the rotation is represented.
	Here we survey the axis-angle and rotation matrix representations.
	
	In the axis-angle representation, a rotation $R$ is represented by the axis $\mathbf{v} \in S^{2} \subset \F^{3}$ of the rotation and the angle $\theta $ (in radians) by which vectors are rotated around $\mathbf{v}$.
	Both can be concisely encapsulated in a \textbf{rotation vector} $\mathbf{x} = \theta \mathbf{v} \in \F^{3}$, which are elements of $B_{3} \left(\mathbf{0}, 2 \pi \right)$, where $B_{3} \left(\mathbf{x}, r \right) = \left\{ \mathbf{y} \in \F^{3} \setsep \Ltwonorm{\mathbf{x} - \mathbf{y}}_{2} < r \right\}$.
	Every pair of a rotation vector and a translation can represent an element of $\SE(3)$ and one can easily extend that to pairs of the form $\F^{3} \times \F^{3}$ by taking the Euclidean norm of the rotation vector modulo $2 \pi$.
	Thus, we say that \textbf{representation space}, the space of all elements of the form of the given representation, is $\F^{3} \times \F^{3}$.
	The \textbf{representing subset}, the subset of the representation space into which $\SE(3)$ is embedded, is $B_{3} \left(\mathbf{0}, 2 \pi \right) \times \F^{3}$.
	Furthermore, there is projection from the representation space to the representation subset, mapping every element of the representation space to the representing subset.
	We refer to this representation as the \textbf{axis-angle-translation representation}.
	
	From all this it follows in particular that in this representation the dimension of the representation space is $6$, which matches the manifold dimension of $\SE(3)$, which makes it the most parsimonious representation we will survey.
	However, there is no direct way to calculate the product of two elements of $\SE(3)$ in this representation. It is necessary to convert them into one of the other representations we will survey.
	We do not know of any established way to construct something like an algebra over this representation space and consequently we know of no such way to construct matrix algebras and investigate their spectral properties.
	
	The other $\SO(3)$ representation we consider are rotation matrices, real $3 \times 3$ orthogonal matrices with a determinant of $1$.
	This representation comes with a multiplicative structure of matrix multiplication and it is embedded in the matrix algebra of all real $3 \times 3$ matrices.
	The representation space of $\SE(3)$ is then $\F^{3 \times 3} \times \F^{3} \cong \F^{12}$.
	Its multiplicative and vector space structure can be identified with the set of all affine transformations of $\F^{3}$, transformations of the form $\mathbf{x} \mapsto A \mathbf{x} + \mathbf{t}$, where $A$ is a linear transformation and $\mathbf{t} \in \F^{3}$.
	This set has both an addition of and multiplication operations, defined by adding or composing, respectively, two affine transformations.
	Despite this, it does not form a ring, because these two binary operations do not satisfy one of the distributive properties.
	Borrowing an example from \cite{Blackett1956}, suppose that $\left(A_{1}, \mathbf{t}_{1}\right)$ and $\left(\mathbf{0}, \mathbf{t}_{2}\right)$ are two affine transformations. Here, $\mathbf{0}$ is the zero linear transformation and $\mathbf{t}_{2} \ne 0$.
	We have
	\begin{equation*}
		\left(\mathbf{0}, \mathbf{t}_{2}\right) \circ \left( \left(A_{1}, \mathbf{t}_{1}\right) + \left(A_{1}, \mathbf{t}_{1}\right) \right) 
		= \left(\mathbf{0}, \mathbf{t}_{2}\right),
	\end{equation*}
	but 
	\begin{equation*}
		\left(\mathbf{0}, \mathbf{t}_{2}\right) \circ \left(A_{1}, \mathbf{t}_{1}\right) + \left(\mathbf{0}, \mathbf{t}_{2}\right) \circ \left(A_{1}, \mathbf{t}_{1}\right)
		= 2 \mathbf{t}_{2}.
	\end{equation*}
	While not being a ring, the affine transformations form a different algebraic structure, that of a near-ring.
	As far as we know, matrix spaces and spectral decompositions were not studied over near-rings in general and over the affine transformations in particular.
	Furthermore, to the best of our knowledge, a projection of affine transformations onto the representing subset has not been worked out in the literature.
	
	Finally, $\SE(3)$ has a group embedding within the matrix algebra $\F^{4 \times 4}$.
	An element $\left(R, \mathbf{t}\right) \in \SE(3)$, where $R$ is represented as $\mathbf{R}$, real $3 \times 3$ orthogonal matrix with determinant of  $1$, can be represented as a block matrix
	\begin{equation}
		\label{eq:SE3MatrixRepresentation2}
		\left[\begin{matrix}
			\mathbf{R} 	& 	\mathbf{t} \\
			\mathbf{0}^{\top} 	& 1
		\end{matrix}\right].
	\end{equation}
	Rectangular block matrix where all blocks are of this form obviously belong to the matrix algebra $\F^{4 n \times 4 n}$, which definitely has a spectral decomposition theorem and many other matrix decompositions.
	Furthermore, an invertible $\mathbf{A} \in \F^{4 \times 4}$ can also be projected onto the representing subset by a two step procedure.
	First, finding a invertible matrix $\mathbf{B} \in \F^{4 \times 4}$ such that the matrix $\mathbf{A} \mathbf{B} $ has $\left(0, 0,0, 1\right)$ in its bottom row. This amounts to changing the coordinates of the matrix $\mathbf{A}$ to one which satisfies this particular constraint on the bottom row.
	Second, denoting by $\left(\mathbf{B} \mathbf{A}\right)_{3\times 3}$ the upper left $3 \times 3$ submatrix of $\mathbf{B} \mathbf{A}$, one takes its singular value decomposition $\left(\mathbf{B} \mathbf{A}\right)_{3\times 3} = \mathbf{V} \mathbf{S} \mathbf{U}^{\top}$, where $\mathbf{S}$ is diagonal and $\mathbf{U}$ and $\mathbf{V}$ are orthogonal. 
	To obtain a matrix of the form \eqref{eq:SE3MatrixRepresentation2}, replace $\left(\mathbf{B} \mathbf{A}\right)_{3\times 3}$ with $\mathbf{V} \mathrm{diag} \left(1, 1, \det \left(\mathbf{V} \mathbf{U}\right) \right) \mathbf{V}^{\top}$.
	
	The great advantage of the matrix representation \eqref{eq:SE3MatrixRepresentation2} is the fact that its representation space is a matrix algebra.
	This allows the use of the familiar linear algebraic toolkit and in particular allows one to define the projection we described above.
	However, this is also the source of the greatest disadvantage.
	The group $\SE(3)$ is embedded in the multiplicative group of $\F^{4 \times 4}$, and no more.
	The other algebraic structures defined on $\F^{4 \times 4}$ are inconsistent with the embedding.
	Merely transposing \eqref{eq:SE3MatrixRepresentation2} would generally take one out of the representing subspace, indicating the involution of $\F^{4 \times 4}$ plays no role in representing $\SE(3)$.
	Furthermore, the relationship between the projection and group structure is hard to characterize.
	This last problem comes to the fore in the situation discussed in \Cref{sec:GroupSynchronizationSpectralMethod}, where we describe the projection onto $\SE(3)$ of an entire column block matrix with $\F^{4 \times 4}$ blocks, which is used in an actual application \cite{Arrigoni2016}.
	Perhaps underscoring the gap between $\SE(3)$ and the representation space in this case, is the fact $\SE(3)$ is a manifold of dimension $6$, while the representation space is $16$-dimensional.
	
	Consider the unit dual quaternion representation of $\SE(3)$ we surveyed in \Cref{sec:SE3RepresentationOnDualQuaternions}.
	We argue that the representation space is very close to the structure of $\SE(3)$, yet similar enough to ordinary vector spaces to allow some of the same linear-algebraic notions that are so useful to the applied mathematician to be brought to bear on applied problems.
	The representation space is very close to the structure of $\SE(3)$ in two ways.
	First, by the fact nearly all dual quaternions are elements of $\SE(3)$ scaled by dual numbers, as shown by \Cref{thm:ProjectionSE3} of \cite{Cui2023}.
	Second, the inverse of an element of $\SE(3)$ represented on the unit dual quaternion is its dual quaternion conjugate, as we saw \Cref{sec:SE3RepresentationOnDualQuaternions}. In this way, the involutive algebra structure of the dual quaternions is intimately tied with the group structure of $\SE(3)$.
	Most importantly, despite the non-commutativity of this algebra, it is sufficiently simple to allow one to establish a spectral theorem as we saw in \Cref{sec:DQSpectralTheorem} and also the derivation of a numerical schemes to approximate eigenvectors, like we surveyed in \Cref{sec:PowerIteration}.
	We suggest that other numerical schemes, like the Arnoldi iteration and other Krylov subspace methods, may also be generalizable to the dual quaternions.
	
	Taken together, these properties indicate that the dual quaternion algebra balances well the difficulties incurred by working over non-commutative algebra with the advantages it offers.

	\section{Alignment of Two Dual Quaternion Vectors}
	\label{apx:Alignment}
	
	Let $\mathbf{g} = \left(g_{1}, \dots, g_{n}\right)^{\top} \in \SE(3)^{n}$ and $\widehat{\mathbf{g}} = \left(\widehat{g}_{1}, \dots, \widehat{g}_{n} \right)^{\top} \in \SE(3)^{n}$.
	Assume that elements of $\SE(3)$ are represented as 
	$g_{j} = \left(q_{j}, \mathbf{t}_{j}\right)  $ and $\widehat{g}_{j} = \left(\widehat{q}_{j}, \widehat{\mathbf{t}}_{j}\right)$,
	where $q_{j}$ and $\widehat{q}_{j}$ are unit quaternions with a non-negative first coordinate, representing a rotation as we discuss in \Cref{sec:SE3RepresentationOnDualQuaternions}, and $\mathbf{t}_{j}$ and $\widehat{\mathbf{t}}_{j}$ are translations represented as vectors in $\F^{3}$.
	Recall \Cref{prop:QuaternionDoubleCoverSO3} and the double cover $\varphi$ defined there.
	Aligning $\mathbf{g}$ and $\widehat{\mathbf{g}}$ amounts to finding $g = \left(q, \mathbf{t}\right)$ solving
	\begin{equation}
		\label{eq:AlignmentOptimizationProblem}
		\min_{g \in \SE(3)} \left\{ \sum_{j=1}^{n} \Ltwonorm{\widehat{\mathbf{q}_{j}} \mathbf{q} - \mathbf{q}_{j} }_{F}^{2}
		+ \sum_{j=1}^{n} \Ltwonorm{\varphi (\widehat{q}_{j}) (\mathbf{t})  + \widehat{\mathbf{t}}_{j} - \mathbf{t}_{j} }_{2}^{2} \right\}.
	\end{equation}
	Here, $\widehat{\mathbf{q}}_{j}$, $\mathbf{q}_{j}$ and $\mathbf{q}$ are real $3 \times 3$ orthogonal matrices with determinant of $1$ representing the same elements of $\SO(3)$ as $\widehat{q}_{j}$, $q_{j}$ and $q$.
	These matrices are constructed by representing $\varphi \left(q\right)$ in the standard basis of $\F^{3}$.
	Also, $\Ltwonorm{\cdot}_{2}$ is the Euclidean norm and $\Ltwonorm{\cdot }_{F}$ is the Frobenius norm.
	Solving \eqref{eq:AlignmentOptimizationProblem} amounts to finding $g$ minimizing the sum of squared distances between corresponding rotations of $\mathbf{g}$ and $\widehat{\mathbf{g}}$ and a sum of squared distances between the translations of $\mathbf{g}$ and $\widehat{\mathbf{g}}$.
	
	The two sums in \eqref{eq:AlignmentOptimizationProblem} are two decoupled optimization problems, since the left one depends only on $q$ and the right one only on $\mathbf{t}$.
	We consider these two decoupled problems separately.
	We begin with the translation sum. 
	Let $F (\mathbf{t})$ be the right sum in \eqref{eq:AlignmentOptimizationProblem}.
	We wish to find the minimum of $F$ in $\F^{3}$.
	Because the Euclidean norm is preserved under orthogonal transformations and $\varphi$ is a homomorphism, $F$ can be written as
	\begin{equation*}
		F (\mathbf{t})
		= \sum_{j=1}^{n} \Ltwonorm{ \mathbf{t} + \varphi\left(\widehat{q}_{j}^{*}\right) \left(\widehat{\mathbf{t}}_{j} - \mathbf{t}_{j}\right)}^{2}.
	\end{equation*}
	Fermat's theorem then yields that its exterema is
	\begin{equation*}
		\mathbf{t}
		= \frac{1}{n} \sum_{j=1}^{n} \varphi \left(\widehat{q}_{j}^{*}\right) \left( \mathbf{t}_{j} - \widehat{\mathbf{t}}_{j}\right),
	\end{equation*}
	exactly as in \eqref{eq:BestAlignerTranslation}.
	That it is a minimum follows from the fact $F$ is convex and defined on $\F^{3}$, and therefore can have no local maxima.
	
	The rotation sum is slightly more involved.
	First, we note that for any$\mathbf{R}_{1}$ and $\mathbf{R}_{2}$  two real $3 \times 3$ orthogonal matrices with determinant of $1$ we have
	\begin{equation*}
		\Ltwonorm{\mathbf{R}_{1} - \mathbf{R}_{2}}_{F}^{2}
		= \Ltwonorm{\mathbf{R}_{1}}_{F}^{2} 
		+ \Ltwonorm{\mathbf{R}_{2}}_{F}^{2} 
		- 2 \tr \left(\mathbf{R}_{1} \mathbf{R}_{2}^{\top}\right)
		= 3 - 2 \tr \left( \mathbf{R}_{1}^{\top} \mathbf{R}_{2} \right).
	\end{equation*}
	A minimizer of the left sum in \eqref{eq:AlignmentOptimizationProblem} is therefore a maximizer over the unit quaternions with non-negative first coordinate of 
	\begin{equation*}
		G_{1} (\mathbf{q})
		= \sum_{j=1}^{n} \tr \left( \mathbf{q}^{\top} \widehat{\mathbf{q}}_{j}^{\top} \mathbf{q}_{j} \right).
	\end{equation*}
	Finally, let $q_{1}$ and $q_{2}$ are quaternions representing the same elements of $\SO(3)$ as  $\mathbf{R}_{1}$ and $\mathbf{R}_{2}$, respectively. As we argue below, the following identity holds:
	\begin{equation}
		\label{eq:TraceIdentity}
		\tr \left( \mathbf{R}_{1}^{\top} \mathbf{R}_{2} \right)
		= 4 \Eucprod{q_{1}}{q_{2}}^{2} - 1,
	\end{equation}
	where $\Eucprod{\cdot}{\cdot}$ is the Euclidean inner product on $\F^{4}$ applied to quaternions via their obvious identification with $\F^{4}$.
	Therefore, the maximizer of  $G_{1}$ over the unit quaternions with non-negative first coordinate is the maximizer over the same domain of
	\begin{equation*}
		G (q)
		= \sum_{j=1}^{n} \Eucprod{\widehat{q}_{j} q}{q_{j}}.
	\end{equation*}
	We note that 
	\begin{equation*}
		\Eucprod{q_{1}}{q_{2}}
		= \frac{1}{2} \left( q_{1}^{*} q_{2} + q_{2}^{*} q_{1} \right),
	\end{equation*}
	and so 
	\begin{equation*}
		\Eucprod{q_{1} q}{q_{2}}
		= \frac{1}{2} \left( q^{*} q_{1}^{*} q_{2} + q_{2}^{*} q_{1} q \right)
		= \frac{1}{2} \left( q^{*} \left( q_{1}^{*} q_{2} \right) + \left( q_{1}^{*} q_{2} \right)^{*} q \right)
		= \Eucprod{q}{q_{1}^{*} q_{2}}.
	\end{equation*}
	Therefore, for any unit quaternion with non-negative first coordinate $q$ we have:
	\begin{equation*}
		G (q)
		= \sum_{j=1}^{n} \Eucprod{q}{\widehat{q}_{j}^{*} q_{j}}
		= \Eucprod{q}{ \sum_{j=1}^{n} \widehat{q}_{j}^{*} q_{j}}
		\le \Ltwonorm{q}_{2} \Ltwonorm{\sum_{j=1}^{n} \widehat{q}_{j}^{*} q_{j}}_{2}
		= \Ltwonorm{\sum_{j=1}^{n} \widehat{q}_{j}^{*} q_{j}}_{2}.
	\end{equation*}
	In the above we used the Cauchy-Schwartz inequality applied to the quaternions, again identified as elements of $\F^{4}$.
	Simple substitution into $G$ shows that the $q$ defined in \eqref{eq:BestAlignerRotation} is a unit quaternion with non-negative first coordinate which achieves this bound.
	
	It remains to prove \eqref{eq:TraceIdentity}.
	It follows from the following lemma:
	\begin{lemma}
		\label{lem:TraceIdentity}
		Let $q$ be a unit quaternion.
		Let $\mathbf{R}$ be the real $3 \times 3$ orthogonal matrix with determinant of $1$ formed by representing $\varphi(q)$ using the standard basis of $\F^{3}$.
		If $q = a + b \ii + c \jj + d \kk$, then 
		\begin{equation*}
			\tr \left(\mathbf{R}\right)
			= 4 a^{2} - 1.
		\end{equation*}
	\end{lemma}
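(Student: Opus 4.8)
The plan is to read off the diagonal entries of $\mathbf{R}$ directly from the formula $\varphi(q)(x) = q x q^{*}$ given in \Cref{prop:QuaternionDoubleCoverSO3}, sum them, and then invoke the constraint $\left|q\right| = 1$.

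First I would recall that, under the identification $\F^{3} \cong \tal$ used to define $\varphi$, the columns of $\mathbf{R}$ are the images $q \ii q^{*}$, $q \jj q^{*}$, $q \kk q^{*}$ of the standard basis vectors, each re-expressed in the basis $\ii, \jj, \kk$. Hence $\tr(\mathbf{R})$ is the sum of the $\ii$-coordinate of $q \ii q^{*}$, the $\jj$-coordinate of $q \jj q^{*}$, and the $\kk$-coordinate of $q \kk q^{*}$. Next I would carry out one of these three products, say $q \ii q^{*}$, using the multiplication rules \eqref{eq:QuaternionGeneratorsRelations}; a short direct computation gives that its $\ii$-coordinate is $a^{2} + b^{2} - c^{2} - d^{2}$. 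The cyclic symmetry of \eqref{eq:QuaternionGeneratorsRelations} under the algebra automorphism of $\qal$ that fixes $\F$ and sends $\ii \mapsto \jj \mapsto \kk \mapsto \ii$ then immediately yields that the $\jj$-coordinate of $q \jj q^{*}$ is $a^{2} - b^{2} + c^{2} - d^{2}$ and the $\kk$-coordinate of $q \kk q^{*}$ is $a^{2} - b^{2} - c^{2} + d^{2}$. Adding the three gives $\tr(\mathbf{R}) = 3 a^{2} - b^{2} - c^{2} - d^{2}$, and since $q$ is a unit quaternion we have $b^{2} + c^{2} + d^{2} = 1 - a^{2}$, so $\tr(\mathbf{R}) = 3 a^{2} - (1 - a^{2}) = 4 a^{2} - 1$, as claimed.

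I do not expect a genuine obstacle: the only point demanding care is the bookkeeping in the quaternion product $q \ii q^{*}$, and the cyclic-symmetry remark reduces the work to that single instance. As a sanity check — and an alternative, essentially computation-free derivation — one may note that $\tr(\mathbf{R})$ depends only on the rotation angle $\theta$ of $\varphi(q)$, with $\tr(\mathbf{R}) = 1 + 2 \cos\theta$, while the first coordinate of $q$ satisfies $a = \pm \cos(\theta/2)$, so that $4 a^{2} - 1 = 4 \cos^{2}(\theta/2) - 1 = 1 + 2 \cos\theta$. I would nonetheless present the direct computation, since it is self-contained and avoids having to separately justify the half-angle relation between $q$ and $\theta$.
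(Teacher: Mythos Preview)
Your proof is correct, but the paper takes a different route --- in fact, precisely the one you relegate to a sanity check. The paper's argument works entirely through the axis--angle representation: it cites the identities $\tr(\mathbf{R}) = 2\cos\theta + 1$ and $a = \cos\tfrac{\theta}{2}$ from an external reference, then applies the double-angle formula $\cos\theta = 2\cos^{2}\tfrac{\theta}{2} - 1$ to conclude. Your primary argument instead computes the diagonal entries of $\mathbf{R}$ directly from the conjugation formula $x \mapsto q x q^{*}$ and the relations \eqref{eq:QuaternionGeneratorsRelations}, exploiting the cyclic symmetry to reduce to a single product. What your approach buys is self-containment: it never leaves the quaternion algebra and requires no appeal to the half-angle parametrization or to outside references. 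What the paper's approach buys is brevity, at the cost of importing two identities whose proofs are deferred to the literature. Both are entirely valid; your instinct to prefer the self-contained version is reasonable, and your parenthetical remark already captures the paper's argument in full.
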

	
	We obtain \eqref{eq:TraceIdentity} as a corollary of \Cref{lem:TraceIdentity} due to a combination of two facts. First, $\mathbf{R}_{1}^{\top} \mathbf{R}$ in \eqref{eq:TraceIdentity} expressed in quaternions is $q_{1}^{*} q_{2} $. Second, as can be worked out from \eqref{eq:QuaternionGeneratorsRelations}, 	the first coordinate of the product $q_{1}^{*} q_{2}$ is $\Eucprod{q_{1}}{q_{2}}$.
	
	\begin{proof}
		Using the axis-angle representation of $\mathbf{R}$, we consider it a rotation around axis $\mathbf{r} \in S^{2}$ by angle $\theta$.
		From \cite[Appendix A]{Huynh2009}, $\tr\left(\mathbf{R}\right) = 2 \cos \theta + 1$, while \cite[eq. (6)]{Huynh2009} yields $a = \cos \frac{\theta}{2}$.
		Therefore, using the trigonometric identity $\cos \theta = 2 \cos^{2} \frac{\theta}{2} - 1$, we obtain
		\begin{equation*}
			4 a^{2} - 1
			= 4 \cos^{2} \frac{\theta}{2} - 1
			= 2 \left( 2 \cos^{2} \frac{\theta}{2} - 1\right) + 1
			= 2 \cos \theta + 1,
		\end{equation*}
		as required.
	\end{proof}
	
	\begin{remark}
		\label{rem:RotationDistanceMetric}
		From the identities used in the proof of \Cref{lem:TraceIdentity}, it follows that the metric we defined on the unit quaternions in in \Cref{sec:SyntheticEXperimentsSetup} satisfies
		$d (q_{1}, q_{2}) 
		= 2 \arccos \left( \cos \theta \right) = 2 \theta_{1 2} $,
		where $\theta_{1 2}\in [0, \pi )$ and $\theta $ is the rotation angle in the angle-axis representation of the rotation $q_{1}^{*} q_{2}$.
	\end{remark}

	\section*{Data availability}
	The data underlying this article are available in the GitHub repository \texttt{dqsync-python} at \url{https://github.com/idohadi/dqsync-python/}.
	
	\section*{Funding}
	This work was supported by the Binational Science Foundation [2019752]; Binational Science Foundation [2020159] to T.B.; the Israeli Science Foundation [1924/21] to T.B.;  and the Deutsche Forschungsgemeinschaft [514588180] to N.S.

	% Print bibliography
	\printbibliography

@Book{Selig2005,
  author    = {Selig, Jon M},
  publisher = {Springer},
  title     = {Geometric Fundamentals Of Robotics},
  year      = {2005},
  volume    = {128},
}

@Article{Blackett1956,
  author    = {D. W. Blackett},
  journal   = {Proceedings of the American Mathematical Society},
  title     = {The near-ring of affine transformations},
  year      = {1956},
  number    = {3},
  pages     = {517--519},
  volume    = {7},
  doi       = {10.1090/s0002-9939-1956-0078651-x},
  publisher = {American Mathematical Society ({AMS})},
}

@Article{Qi2021,
  author    = {Qi, Liqun and Luo, Ziyan},
  title     = {Eigenvalues and Singular Values of Dual Quaternion Matrices},
  year      = {2021},
  copyright = {Creative Commons Attribution 4.0 International},
  doi       = {10.48550/ARXIV.2111.12211},
  keywords  = {Rings and Algebras (math.RA), FOS: Mathematics},
  publisher = {arXiv},
  url       = {https://arxiv.org/abs/2111.12211},
}

@Article{Wang2013,
  author     = {L. Wang and A. Singer},
  journal    = {Information and Inference},
  title      = {Exact and stable recovery of rotations for robust synchronization},
  year       = {2013},
  month      = {sep},
  number     = {2},
  pages      = {145--193},
  volume     = {2},
  doi        = {10.1093/imaiai/iat005},
  groups     = {Synchronization},
  publisher  = {Oxford University Press ({OUP})},
  readstatus = {read},
}

@Article{Boumal2014,
  author     = {Boumal, Nicolas and Singer, Amit and Absil, P.-A. and Blondel, Vincent D.},
  journal    = {Information and Inference: A Journal of the IMA},
  title      = {Cramér–Rao bounds for synchronization of rotations},
  year       = {2014},
  number     = {1},
  pages      = {1-39},
  volume     = {3},
  doi        = {10.1093/imaiai/iat006},
  groups     = {Synchronization},
  readstatus = {read},
}

@Article{Boumal2016,
  author    = {Nicolas Boumal},
  journal   = {{SIAM} Journal on Optimization},
  title     = {Nonconvex Phase Synchronization},
  year      = {2016},
  month     = {jan},
  number    = {4},
  pages     = {2355--2377},
  volume    = {26},
  doi       = {10.1137/16m105808x},
  groups    = {Synchronization},
  publisher = {Society for Industrial {\&} Applied Mathematics ({SIAM})},
}

@Article{Lerman2021,
  author     = {Lerman, Gilad and Shi, Yunpeng},
  journal    = {Foundations of Computational Mathematics},
  title      = {Robust group synchronization via cycle-edge message passing},
  year       = {2021},
  pages      = {1--77},
  groups     = {Synchronization},
  publisher  = {Springer},
  readstatus = {read},
}

@Article{Singer2011,
  author     = {Singer, Amit},
  journal    = {Applied and computational harmonic analysis},
  title      = {Angular synchronization by eigenvectors and semidefinite programming},
  year       = {2011},
  number     = {1},
  pages      = {20--36},
  volume     = {30},
  groups     = {Synchronization},
  publisher  = {Elsevier},
  readstatus = {read},
}

@Article{Arrigoni2016,
  author     = {Arrigoni, Federica and Rossi, Beatrice and Fusiello, Andrea},
  journal    = {SIAM Journal on Imaging Sciences},
  title      = {Spectral Synchronization of Multiple Views in SE(3)},
  year       = {2016},
  number     = {4},
  pages      = {1963-1990},
  volume     = {9},
  abstract   = {This paper addresses the problem of rigid-motion synchronization (a.k.a. motion averaging) in the Special Euclidean Group SE(3), which finds application in structure-from-motion and registration of multiple three-dimensional (3D) point-sets. After relaxing the geometric constraints of rigid motions, we derive a simple closed-form solution based on a spectral decomposition, which is then projected onto SE(3). Our formulation is extremely efficient, as rigid-motion synchronization is cast to an eigenvalue decomposition problem. Robustness to outliers is gained through Iteratively Reweighted Least Squares. Besides providing a theoretically appealing solution, since our method recovers at the same time both rotations and translations, we demonstrate through experimental results that our approach is significantly faster than the state of the art, while providing accurate estimates of rigid motions.},
  doi        = {10.1137/16M1060248},
  eprint     = {https://doi.org/10.1137/16M1060248},
  groups     = {Synchronization},
  readstatus = {read},
  url        = {https://doi.org/10.1137/16M1060248},
}

@Article{Romanov2020,
  author     = {Elad Romanov and Matan Gavish},
  journal    = {Applied and Computational Harmonic Analysis},
  title      = {The Noise-sensitivity phase transition in spectral group synchronization over compact groups},
  year       = {2020},
  month      = {nov},
  number     = {3},
  pages      = {935--970},
  volume     = {49},
  doi        = {10.1016/j.acha.2019.05.002},
  groups     = {Synchronization},
  publisher  = {Elsevier {BV}},
  readstatus = {read},
}

@Article{Perry2018,
  author     = {Amelia Perry and Alexander S. Wein and Afonso S. Bandeira and Ankur Moitra},
  journal    = {Communications on Pure and Applied Mathematics},
  title      = {Message-Passing Algorithms for Synchronization Problems over Compact Groups},
  year       = {2018},
  month      = {apr},
  number     = {11},
  pages      = {2275--2322},
  volume     = {71},
  doi        = {10.1002/cpa.21750},
  groups     = {Synchronization},
  publisher  = {Wiley},
  readstatus = {read},
}

@Article{Ling2022,
  author   = {Ling, Shuyang},
  journal  = {SIAM Journal on Optimization},
  title    = {Improved Performance Guarantees for Orthogonal Group Synchronization via Generalized Power Method},
  year     = {2022},
  number   = {2},
  pages    = {1018-1048},
  volume   = {32},
  abstract = {Given the noisy pairwise measurements among a set of unknown group elements, how does one recover them efficiently and robustly? This problem, known as group synchronization, has drawn tremendous attention in the scientific community. In this work, we focus on orthogonal group synchronization that has found many applications, including computer vision, robotics, and cryo-electron microscopy. One commonly used approach is the least squares estimation that requires solving a highly nonconvex optimization program. The past few years have witnessed considerable advances in tackling this challenging problem by convex relaxation and efficient first order methods. However, one fundamental theoretical question remains to be answered: how does the recovery performance depend on the noise strength? To answer this question, we study a benchmark model: recovering orthogonal group elements from their pairwise measurements corrupted by additive Gaussian noise. We investigate the performance of convex relaxation and the generalized power method (GPM). By applying the novel leave-one-out technique, we prove that the GPM with spectral initialization enjoys linear convergence to the global optima to the convex relaxation that also matches the maximum likelihood estimator. Our result achieves a near-optimal performance bound on the convergence of the GPM and improves the state-of-the-art theoretical guarantees on the tightness of convex relaxation by a large margin.},
  doi      = {10.1137/20M1389571},
  eprint   = {https://doi.org/10.1137/20M1389571},
  groups   = {Synchronization},
  url      = {https://doi.org/10.1137/20M1389571},
}

@Article{Ling2022a,
  author    = {Shuyang Ling},
  journal   = {Applied and Computational Harmonic Analysis},
  title     = {Near-optimal performance bounds for orthogonal and permutation group synchronization via spectral methods},
  year      = {2022},
  month     = {sep},
  pages     = {20--52},
  volume    = {60},
  doi       = {10.1016/j.acha.2022.02.003},
  groups    = {Synchronization},
  publisher = {Elsevier {BV}},
}

@Book{Hazewinkel2004,
  author    = {Hazewinkel, Michiel and Gubareni, Nadiya and Kirichenko, Vladimir V},
  publisher = {Springer Science \& Business Media},
  title     = {Algebras, Rings and Modules},
  year      = {2004},
  volume    = {1},
}

@Book{Trefethen1997,
  author    = {Trefethen, Lloyd N. and Bau, David},
  publisher = {Siam},
  title     = {Numerical Linear Algebra},
  year      = {1997},
  volume    = {181},
}

@Book{Garling2011,
  author    = {Garling, David JH},
  publisher = {Cambridge University Press},
  title     = {Clifford Algebras: An Introduction},
  year      = {2011},
  volume    = {78},
  groups    = {Clifford algebras},
}

@Article{Shkolnisky2012,
  author     = {Yoel Shkolnisky and Amit Singer},
  journal    = {{SIAM} Journal on Imaging Sciences},
  title      = {Viewing Direction Estimation in Cryo-{EM} Using Synchronization},
  year       = {2012},
  month      = {jan},
  number     = {3},
  pages      = {1088--1110},
  volume     = {5},
  doi        = {10.1137/120863642},
  groups     = {Cryo-EM},
  publisher  = {Society for Industrial {\&} Applied Mathematics ({SIAM})},
  readstatus = {read},
}

@Article{Huynh2009,
  author    = {Du Q. Huynh},
  journal   = {Journal of Mathematical Imaging and Vision},
  title     = {Metrics for 3D Rotations: Comparison and Analysis},
  year      = {2009},
  month     = {jun},
  number    = {2},
  pages     = {155--164},
  volume    = {35},
  doi       = {10.1007/s10851-009-0161-2},
  publisher = {Springer Science and Business Media {LLC}},
}

@Article{Qi2022,
  author   = {Qi, Liqun and Ling, Chen and Yan, Hong},
  journal  = {Communications on Applied Mathematics and Computation},
  title    = {Dual Quaternions and Dual Quaternion Vectors},
  year     = {2022},
  issn     = {2661-8893},
  number   = {4},
  pages    = {1494--1508},
  volume   = {4},
  abstract = {We introduce a total order and an absolute value function for dual numbers. The absolute value function of dual numbers takes dual number values, and has properties similar to those of the absolute value function of real numbers. We define the magnitude of a dual quaternion, as a dual number. Based upon these, we extend 1-norm, $$\infty$$-norm, and 2-norm to dual quaternion vectors.},
  doi      = {10.1007/s42967-022-00189-y},
  groups   = {Dual quaternions},
  priority = {prio1},
  refid    = {Qi2022},
  url      = {https://doi.org/10.1007/s42967-022-00189-y},
}

@Misc{Cui2023,
  author        = {Chunfeng Cui and Liqun Qi},
  title         = {A Power Method for Computing the Dominant Eigenvalue of a Dual Quaternion Hermitian Matrix},
  year          = {2023},
  archiveprefix = {arXiv},
  eprint        = {2304.04355},
  groups        = {Dual quaternions},
  primaryclass  = {math.OC},
}

@Article{Oezyesil2017,
  author    = {Özyeşil, Onur and Voroninski, Vladislav and Basri, Ronen and Singer, Amit},
  journal   = {Acta Numerica},
  title     = {A survey of structure from motion.},
  year      = {2017},
  pages     = {305–364},
  volume    = {26},
  doi       = {10.1017/S096249291700006X},
  groups    = {Structure from motion},
  publisher = {Cambridge University Press},
}

@Article{Bendory2020,
  author     = {Tamir Bendory and Alberto Bartesaghi and Amit Singer},
  journal    = {{IEEE} Signal Processing Magazine},
  title      = {Single-Particle Cryo-Electron Microscopy: Mathematical Theory, Computational Challenges, and Opportunities},
  year       = {2020},
  month      = {mar},
  number     = {2},
  pages      = {58--76},
  volume     = {37},
  doi        = {10.1109/msp.2019.2957822},
  groups     = {Cryo-EM},
  publisher  = {Institute of Electrical and Electronics Engineers ({IEEE})},
  readstatus = {read},
}

@Article{Arrigoni2018,
  author    = {Arrigoni, Federica and Rossi, Beatrice and Fragneto, Pasqualina and Fusiello, Andrea},
  journal   = {Computer Vision and Image Understanding},
  title     = {Robust synchronization in SO (3) and SE (3) via low-rank and sparse matrix decomposition},
  year      = {2018},
  pages     = {95--113},
  volume    = {174},
  groups    = {Synchronization},
  publisher = {Elsevier},
}

@Book{Thrun2005,
  author    = {Thrun, Sebastian and Burgard, Wolfram and Fox, Dieter},
  publisher = {MIT Press},
  title     = {Probabilistic Robotics},
  year      = {2005},
  address   = {London, England},
  month     = aug,
  series    = {Intelligent Robotics and Autonomous Agents series},
  language  = {en},
}

@Misc{Janco2022,
  author        = {Noam Janco and Tamir Bendory},
  title         = {Unrolled algorithms for group synchronization},
  year          = {2022},
  archiveprefix = {arXiv},
  eprint        = {2207.09418},
  groups        = {Synchronization},
  primaryclass  = {eess.SP},
}

@InProceedings{Govindu2001,
  author    = {Govindu, V.M.},
  booktitle = {Proceedings of the 2001 IEEE Computer Society Conference on Computer Vision and Pattern Recognition. CVPR 2001},
  title     = {Combining two-view constraints for motion estimation},
  year      = {2001},
  month     = {Dec},
  pages     = {II-II},
  volume    = {2},
  abstract  = {In this paper we describe two methods for estimating the motion parameters of an image sequence. For a sequence of N images, the global motion can be described by N-1 independent motion models. On the other hand, in a sequence there exist as many as /sub 2///sup N(N-1)/ pairwise relative motion constraints that can be solve for efficiently. In this paper we show how to linearly solve for consistent global motion models using this highly redundant set of constraints. In the first case, our method involves estimating all available pairwise relative motions and linearly fining a global motion model to these estimates. In the second instance, we exploit the fact that algebraic (i.e. epipolar) constraints between various image pairs are all related to each other by the global motion model. This results in an estimation method that directly computes the motion of the sequence by using all possible algebraic constraints. Unlike using reprojection error, our optimisation method does not solve for the structure of points resulting in a reduction of the dimensionality of the search space. Our algorithms are used for both 3D camera motion estimation and camera calibration. We provide real examples of both applications.},
  doi       = {10.1109/CVPR.2001.990963},
  issn      = {1063-6919},
}

@InProceedings{Torsello2011,
  author    = {Torsello, Andrea and Rodolà, Emanuele and Albarelli, Andrea},
  booktitle = {CVPR 2011},
  title     = {Multiview registration via graph diffusion of dual quaternions},
  year      = {2011},
  month     = {June},
  pages     = {2441-2448},
  abstract  = {Surface registration is a fundamental step in the reconstruction of three-dimensional objects. While there are several fast and reliable methods to align two surfaces, the tools available to align multiple surfaces are relatively limited. In this paper we propose a novel multiview registration algorithm that projects several pairwise alignments onto a common reference frame. The projection is performed by representing the motions as dual quaternions, an algebraic structure that is related to the group of 3D rigid transformations, and by performing a diffusion along the graph of adjacent (i.e., pairwise alignable) views. The approach allows for a completely generic topology with which the pair-wise motions are diffused. An extensive set of experiments shows that the proposed approach is both orders of magnitude faster than the state of the art, and more robust to extreme positional noise and outliers. The dramatic speedup of the approach allows it to be alternated with pairwise alignment resulting in a smoother energy profile, reducing the risk of getting stuck at local minima.},
  doi       = {10.1109/CVPR.2011.5995565},
  groups    = {Synchronization},
  issn      = {1063-6919},
}
	
	\end{document}